\newcommand{\C}{{\mathbb C}}
\newcommand{\R}{{\mathbb R}}
\newcommand{\N}{{\mathbb N}}
\newcommand{\mS}{\mathbb S}
\newcommand{\mH}{\mathbb H}
\newcommand{\mL}{\mathbb L}
\newcommand{\mW}{\mathbb W}
\newcommand{\inpro}[2]{\left\langle{#1},{#2}\right\rangle}
\newcommand{\inprod}[2]{\left\langle{#1},{#2}\right\rangle}
\newcommand{\vecB}{\boldsymbol{B}}
\newcommand{\vecH}{\boldsymbol{H}}
\newcommand{\vecY}{\boldsymbol{Y}}
\newcommand{\vecZ}{\boldsymbol{Z}}
\newcommand{\veca}{\boldsymbol{a}}
\newcommand{\vecb}{\boldsymbol{b}}
\newcommand{\vecc}{\boldsymbol{c}}
\newcommand{\vece}{\boldsymbol{e}}
\newcommand{\vecm}{\boldsymbol{m}}
\newcommand{\vecu}{\boldsymbol{u}}
\newcommand{\vecv}{\boldsymbol{v}}
\newcommand{\vecw}{\boldsymbol{w}}
\newcommand{\vecx}{\boldsymbol{x}}
\newcommand{\vecphi}{\boldsymbol{\phi}}
\newcommand{\goto}{\rightarrow}
\newcommand{\p}{\partial}
\newcommand{\ds}{\, ds}
\newcommand{\dt}{\, dt}
\newcommand{\dvx}{\, d\vecx}
\newcommand{\nn}{\nonumber}
\numberwithin{equation}{section}
\newtheorem{theorem}{Theorem}[section]
\newtheorem{lemma}[theorem]{Lemma}
\newtheorem{remark}[theorem]{Remark}
\newtheorem{definition}[theorem]{Definition}
\title{Weak solutions of the Landau--Lifshitz--Bloch equation
 \thanks{This work was supported by the 
 Australian Research 
 Council grant DP140101193.}}
\author{Kim Ngan Le
        \thanks{School of Mathematics and Statistics,
        The University of New South Wales,
         Sydney 2052, Australia
         Email:
                 {\tt n.le-kim@unsw.edu.au}
                }     
       }
\newtheorem{notation}{Notation}[section]
\newcommand{\iprod}[1]{\langle#1\rangle}
\newcommand{\bigiprod}[1]{\bigl\langle#1\bigr\rangle}
\begin{document}

\maketitle
\pagenumbering{arabic}
\begin{abstract}
The Landau--Lifshitz--Bloch (LLB) equation is a formulation of dynamic 
micromagnetics valid at all temperatures, treating both the transverse and 
longitudinal relaxation components important for high-temperature 
applications. 
We study LLB equation in case the temperature 
raised higher than the Curie temperature. The existence of weak solution 
is showed and its regularity properties are also discussed. In this way, 
we lay foundations for the rigorous theory of LLB equation that is currently 
not available.

{\bf Key words}: Landau--Lifshitz--Bloch, quasilinear parabolic equation, ferromagnetism

{\bf AMS suject classifications}: 82D40, 35K59, 35R15
\end{abstract}

\section{Introduction}
Micromagnetic modeling has proved itself as a widely used tool, 
complimentary in many respects to experimental measurements. 
The Landau--Lifshitz--Gilbert (LLG) equation~\cite{LL35,Gil55} provides a basis for this modeling, 
especially where the dynamical behaviour is concerned. 
According to this theory, at temperatures below the critical (so-called Curie) temperature, 
the magnetization $\vecm(t,\vecx)\in\mS^2$, where $\mS^2$ is the unit sphere 
in $\R^3$,
for $t>0$ and $\vecx\in D\subset \R^d$, $d=1,2,3$, satisfies the following LLG equation
\begin{equation}\label{eq: LLG}
\frac{\partial \vecm}{\partial t} 
= 
\lambda_1 \vecm\times \vecH_{\text{eff}}
-
\lambda_2\vecm\times(\vecm\times \vecH_{\text{eff}}),
\end{equation}
where $\times$ is the vector cross product in $\R^3$ and $\vecH_{\text{eff}}$ 
is the so-called effective field.

However, for high temperatures the model must be replaced by a more thermodynamically consistent 
approach such as the Landau--Lifshitz--Bloch (LLB) equation~\cite{Garanin1991,Garanin97}. 
The LLB equation essentially interpolates between 
the LLG equation at low temperatures and 
the Ginzburg-Landau theory of phase transitions. It is valid not only 
below but also above the Curie temperature $T_{\text{c}}$. 
An important property of the LLB equation is that the magnetization magnitude 
is no longer conserved but is a dynamical variable~\cite{Garanin97,Evans12}. 
The spin polarization $\vecu(t,\vecx)\in\R^3$, ($\vecu=\vecm/m_s^0$, $\vecm$ is magnetization and 
$m_s^0$ is the saturation magnetization value at $T=0$), 
for $t>0$ and $\vecx\in D\subset \R^d$, $d=1,2,3$, satisfies the following LLB equation
\begin{equation}\label{eq: LLB}
\frac{\partial \vecu}{\partial t} 
= 
\gamma \vecu\times \vecH_{\text{eff}} 
+
L_1\frac{1}{|\vecu|^2}(\vecu\cdot\vecH_{\text{eff}})\vecu
-
L_2\frac{1}{|\vecu|^2}\vecu\times(\vecu\times\vecH_{\text{eff}}).
\end{equation}
Here, $|\cdot|$ is the Euclidean norm in $\R^3$, $\gamma>0$ is the gyromagnetic ratio, and $L_1$ 
and $L_2$ are the longitudial  and transverse  
damping parameters, respectively.

LLB micromagnetics has become a real alternative to LLG micromagnetics 
for temperatures which are close to the Curie temperature 
($T\gtrsim\tfrac34T_{\text{c}}$). 
This is realistic for some novel exciting phenomena, such as 
light-induced demagnetization with powerfull femtosecond (fs) lasers~\cite{Atxitia2007}. 
During this process the electronic temperature is normally 
raised higher than $T_{\text{c}}$. 
Micromagnetics based on the LLG equation cannot work under these 
circumstances while micromagnetics based on the LLB equation has proved 
to describe correctly the observed fs magnetization dynamics.

In this paper, we consider a deterministic form of a ferromagnetic LLB equation, 
in which the temperature 
$T$ is raised higher than $T_{\text{c}}$, and  as a consequence the longitudial $L_1$ and transverse $L_2$ damping parameters 
are equal. The effective field $\vecH_{\text{eff}}$ is given by
\[
 \vecH_{\text{eff}} = \Delta\vecu - 
 \frac{1}{\chi_{||}}
 \bigg(1+\frac{3}{5}\frac{T}{T-T_c}|\vecu|^2\bigg)\vecu, 
\]
where $\chi_{||}$ is the longitudinal susceptibility.

By using the vector triple product identity 
$\veca\times(\vecb\times\vecc) = \vecb(\veca\cdot\vecc)-\vecc(\veca\cdot\vecb)$, we get
\[
\vecu\times(\vecu\times\vecH_{\text{eff}})
=
(\vecu\cdot\vecH_{\text{eff}})\vecu
-
|\vecu|^2\vecH_{\text{eff}},
\]
and from property $L_1=L_2=:\kappa_1$, 
we can rewrite~\eqref{eq: LLB} as follows
\begin{equation}\label{eq: LLB2}
\frac{\partial \vecu}{\partial t} 
= 
\kappa_1\Delta\vecu
+
\gamma\vecu\times  \Delta\vecu
-
\kappa_2(1+\mu|\vecu|^2)\vecu,
\quad \text{with }
\kappa_2 := \frac{\kappa_1}{\chi_{||}},
\quad
\mu := \frac{3T}{5(T-T_c)}.
\end{equation}
So the LLB equation we are going to study in this paper is equation~\eqref{eq: LLB2} with   real positive 
coefficients $\kappa_1,\kappa_2,\gamma,\mu$, initial data 
$\vecu(0,\vecx)=\vecu_0(\vecx)$ and subject to 
homogeneous Neumann boundary conditions.

Various results on existence of global weak
solutions  of the LLG equation~\eqref{eq: LLG} are proved in~\cite{CarbouFabrie01,AloSoy92}. More complete
lists can be found in~\cite{Cimrak_survey,GuoDing08,KruzikProhl06}.
Furthermore, there is also some research about the weak solution of its stochastic version (i.e., the effective field 
is perturbed by a Gaussian noise), such as in~\cite{BrzGolJer12,bookBanBrzNekPro13}. 
It should be mentioned that the proof of existence in~\cite{Banas2013,BanBrzPro13,GoldysLeTran2016} is a
constructive proof, namely an approximate solution can be
computed.

To the best of our knowledge the analysis of the LLB equation is an open problem at present. 
In this paper, we introduce a definition of  weak solutions of the LLB equation. 
By introducing the Faedo--Galerkin approximations and using the method of compactness, we 
prove the existence of weak solutions for the LLB equation.

This paper is organized as follows. 
In Section~\ref{sec: nota} we introduce the notations and formulate the main result 
(Theorem~\ref{theo: main}) on the existence of the weak solution of~\eqref{eq: LLB2} 
as well as some regularity properties. In Section~\ref{sec: FG} we introduce the Faedo--Galerkin 
approximations and prove for them some uniform bounds in various norms. 
In Section~\ref{sec: Exist}, we use the method of compactness to show the existence of 
a weak solution and prove the main theorem. Finally, in the Appendix we collect, 
for the reader's convenience, some facts scattered in the literature that are used 
in the course of the proof.

\section{Notation and the formulation of the main result}\label{sec: nota}
Before presenting the definition of a weak solution to the LLB equation~\eqref{eq: LLB2}, 
it is necessary to introduce some function spaces.

The function spaces $\mH^1(D,\R^3)=:\mH^1$  are defined as follows:
\begin{align*}
\mH^1(D,\R^3)
&=
\left\{ \vecu\in\mL^2(D,\R^3) : \frac{\p\vecu}{\p
x_i}\in \mL^2(D,\R^3)\quad\text{for } i=1,2,3.
\right\}.
\end{align*}
Here, $\mL^p(D,\R^3)=:\mL^p$ with $p>0$ is the usual space of $p^\text{th}$-power 
Lebesgue
integrable functions defined on $D$ and taking values in $\R^3$.
Throughout this paper, we denote a scalar product in a Hilbert space $H$ by
$\inpro{\cdot}{\cdot}_H$ and its associated norm by $\|\cdot\|_H$. The dual brackets between 
a space $X$ and its dual $X^*$ will be denoted $_{X}\!\iprod{\cdot,\cdot}_{X^*}$. 

\begin{definition}\label{def: weakso}
Given $T>0$, a weak solution 
 $\vecu : [0,T] \goto \mH^1\cap \mL^4$ to~\eqref{eq: LLB2} satisfies
\begin{align}\label{eq: weakLLB}
\iprod{\vecu(t),\vecphi}_{\mL^2} 
=
&\iprod{\vecu_0,\vecphi}_{\mL^2}
-\kappa_1\int_0^t \iprod{\nabla\vecu(s),\nabla\vecphi}_{\mL^2}\ds 
-\gamma\int_0^t \iprod{\vecu(s)\times\nabla\vecu(s),\nabla\vecphi}_{\mL^2}\ds\nn \\
&-\kappa_2\int_0^t \iprod{(1+\mu|\vecu|^2(s))\vecu(s),\vecphi}_{\mL^2}\ds,
\end{align}
for every $\vecphi\in \C_0^{\infty}(D)$ and $t\in[0,T]$.
\end{definition}
Now we  can formulate the main result of this paper.
\begin{theorem}\label{theo: main}
Let $D\subset\R^d$ be an open bounded domain with $C^m$ extension property and assume that $d<2m$.  
For $T>0$ and for the initial data $\vecu_0\in\mH^1$, there exists a weak solution of~\eqref{eq: LLB2} 
such that 
\begin{enumerate}
\item[(a)] for every $t\in[0,T]$,
\begin{align}\label{eq: weakLLB2}
\vecu(t)
=
&\vecu_0
+\kappa_1\int_0^t \Delta\vecu(s)\ds
+\gamma\int_0^t \vecu(s)\times\Delta\vecu(s)\ds\nn\\
&-\kappa_2\int_0^t (1+\mu|\vecu|^2(s))\vecu(s)\ds \quad\text{in $\mL^{3/2}$,}
\end{align}

\item[(b)]
for every $\alpha\in(0,\tfrac14]$, $\vecu\in C^{\alpha}([0,T],\mL^{3/2})$,
\item[(c)] $\sup_{t\in[0,T]}\|\vecu(t,\cdot)\|_{\mL^2}<\infty$.
\end{enumerate}
\end{theorem}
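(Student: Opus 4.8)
The plan is to prove existence by the Faedo--Galerkin method combined with a compactness argument, exactly as announced in the introduction. Let $\{\vece_i\}_{i\ge1}$ be the $\mL^2$-orthonormal basis of eigenfunctions of the negative Neumann Laplacian on $D$, which simultaneously diagonalises the $\mH^1$ inner product, and put $V_n=\spann\{\vece_1,\dots,\vece_n\}$ with $P_n$ the associated $\mL^2$-orthogonal projection. I would seek $\vecu_n(t)=\sum_{i=1}^{n}c_i^n(t)\vece_i$ solving the projected problem
\[
\iprod{\partial_t\vecu_n,\vecphi}_{\mL^2}=-\kappa_1\iprod{\nabla\vecu_n,\nabla\vecphi}_{\mL^2}-\gamma\iprod{\vecu_n\times\nabla\vecu_n,\nabla\vecphi}_{\mL^2}-\kappa_2\iprod{(1+\mu|\vecu_n|^2)\vecu_n,\vecphi}_{\mL^2}
\]
for all $\vecphi\in V_n$, with $\vecu_n(0)=P_n\vecu_0$. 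Since the right-hand side is polynomial in the coefficients $c_i^n$, Picard--Lindel\"of yields a local-in-time solution, and the a priori bound below excludes blow-up and extends it to $[0,T]$.

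The a priori estimates are the engine of the proof. Testing with $\vecphi=\vecu_n$, the gyromagnetic term drops out because $\iprod{\vecu_n\times\nabla\vecu_n,\nabla\vecu_n}_{\mL^2}=0$ by perpendicularity of the cross product, giving
\[
\tfrac12\tfrac{d}{dt}\|\vecu_n\|_{\mL^2}^2+\kappa_1\|\nabla\vecu_n\|_{\mL^2}^2+\kappa_2\|\vecu_n\|_{\mL^2}^2+\kappa_2\mu\|\vecu_n\|_{\mL^4}^4=0,
\]
so that $\vecu_n$ is bounded, uniformly in $n$, in $L^\infty(0,T;\mL^2)\cap L^2(0,T;\mH^1)\cap L^4(0,T;\mL^4)$; this already delivers (c). Testing next with $\vecphi=-\Delta\vecu_n\in V_n$, the gyromagnetic term again vanishes and, after integration by parts, the cubic term is seen to have a favourable sign since $\iprod{|\vecu_n|^2\vecu_n,-\Delta\vecu_n}_{\mL^2}\ge0$; this produces
\[
\tfrac12\tfrac{d}{dt}\|\nabla\vecu_n\|_{\mL^2}^2+\kappa_1\|\Delta\vecu_n\|_{\mL^2}^2\le0,
\]
hence a uniform bound in $L^\infty(0,T;\mH^1)\cap L^2(0,T;\mH^2)$, where the $C^m$ extension property with $d<2m$ is invoked for elliptic regularity of the Neumann problem and for the embedding $\mH^1\hookrightarrow\mL^6$.

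With both estimates in hand I would bound the time derivative: $\kappa_1\Delta\vecu_n\in L^2(0,T;\mL^2)$, the product $\vecu_n\times\Delta\vecu_n\in L^2(0,T;\mL^{3/2})$ by $\mH^1\hookrightarrow\mL^6$ and H\"older, and $|\vecu_n|^2\vecu_n\in L^\infty(0,T;\mL^2)$. Because $P_n$ is \emph{not} bounded on $\mL^{3/2}$, I would phrase the bound for $\partial_t\vecu_n$ in $L^2(0,T;(\mH^1)^\ast)$, using that $P_n$ \emph{is} bounded on $\mH^1$. A standard Aubin--Lions--Simon compactness argument with $\mH^2\hookrightarrow\hookrightarrow\mH^1\hookrightarrow(\mH^1)^\ast$ then furnishes a subsequence such that $\vecu_n\to\vecu$ strongly in $L^2(0,T;\mH^1)$ and a.e., while $\Delta\vecu_n\rightharpoonup\Delta\vecu$ weakly in $L^2(0,T;\mL^2)$ and $\vecu_n$ converges weakly-$\ast$ in $L^\infty(0,T;\mH^1)$. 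The strong $L^2(0,T;\mH^1)$ convergence (hence compactness in $L^2(0,T;\mL^4)$) paired with the weak convergence of $\nabla\vecu_n$ identifies the limit of $\vecu_n\times\nabla\vecu_n$ as $\vecu\times\nabla\vecu$, and a.e. convergence with the $L^\infty(0,T;\mL^6)$ bound identifies the limit of $|\vecu_n|^2\vecu_n$ as $|\vecu|^2\vecu$. Passing to the limit in the projected equation shows $\vecu$ is a weak solution in the sense of Definition~\ref{def: weakso}; since each term on the right of~\eqref{eq: weakLLB2} lies in $L^1(0,T;\mL^{3/2})$, the identity holds in $\mL^{3/2}$, proving (a), and reading $\partial_t\vecu$ off the right-hand side gives $\partial_t\vecu\in L^q(0,T;\mL^{3/2})$, whence $\|\vecu(t_2)-\vecu(t_1)\|_{\mL^{3/2}}\le\int_{t_1}^{t_2}\|\partial_t\vecu\|_{\mL^{3/2}}\,ds$ yields the H\"older continuity (b).

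\textbf{Main obstacle.} The crux is the quasilinear gyromagnetic term $\gamma\,\vecu\times\Delta\vecu$. In three dimensions the product of $\vecu\in\mH^1$ with $\Delta\vecu\in\mL^2$ only lands in $\mL^{3/2}$ (which is precisely why~\eqref{eq: weakLLB2} is stated in that space), and passing to the limit in this nonlinearity demands genuine \emph{strong} convergence of $\vecu_n$ in $\mH^1$. Thus the real work lies in securing the second-order energy estimate and the matching Aubin--Lions compactness, together with the careful handling of the projection $P_n$ on the low-integrability spaces; the other two terms are comparatively routine once these are established.
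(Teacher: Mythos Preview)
Your proposal is correct and shares with the paper the Faedo--Galerkin scheme and the two energy estimates (testing with $\vecu_n$ and with $-\Delta\vecu_n$), but the compactness step is genuinely different. You bound $\partial_t\vecu_n$ in $L^2(0,T;(\mH^1)^\ast)$ and apply the classical Aubin--Lions lemma with the triple $\mH^2\hookrightarrow\hookrightarrow\mH^1\hookrightarrow(\mH^1)^\ast$ to obtain \emph{strong} convergence of $\vecu_n$ in $L^2(0,T;\mH^1)$; this makes the passage to the limit in the gyromagnetic term $\vecu_n\times\nabla\vecu_n$ essentially trivial. The paper instead works in the fractional scale $X^{-\beta}$, bounds $\vecu_n$ in $H^1(0,T;X^{-\beta})\cap L^p(0,T;\mH^1)$, and invokes the Flandoli--Gatarek compactness theorems to extract strong convergence only in $C([0,T];X^{-\bar\beta})\cap L^{\bar p}(0,T;\mL^4)$ together with \emph{weak} convergence in $L^2(0,T;\mH^1)$; the limit in $\vecu_n\times\nabla\vecu_n$ then needs the weak--strong splitting argument of Lemma~\ref{lem: conver2}. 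Your route is shorter and more elementary for this deterministic problem; the paper's route delivers $\vecu\in C([0,T];X^{-\bar\beta})$ directly from compactness rather than a posteriori from the equation, and its fractional-space framework is tailored to the stochastic extensions treated in the related literature the author cites. Two minor remarks: the paper actually uses the Dirichlet eigenbasis ($\vece_i=0$ on $\partial D$) rather than the Neumann one, though either works for the Galerkin scheme; and your observation that $P_n$ is uniformly bounded on $(\mH^1)^\ast$ is exactly the content of the paper's Lemma~\ref{lem: boundPi_n} specialised to $\beta=\tfrac12$.
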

\begin{remark}
The notation $\Delta\vecu$ and $\vecu\times\Delta\vecu$ will 
be defined in the Notations~\ref{no: nota1}--\ref{no: nota2}.
\end{remark}

\section{Faedo-Galerkin Approximation}\label{sec: FG}
Let $A=-\Delta$ be the negative Laplace operator. 
From~\cite[Theorem 1, p. 335]{Evans1998}, there exists an orthonormal basis 
$\{\vece_i\}_{i=1}^{\infty}$ of $\mL^2$, consisting 
of eigenvectors for operator $A$, such that $\vece_i\in\C^{m}(D)\cap\mL^{\infty}$ 
for all $i=1,2,$\dots and
\[
-\Delta\vece_i = \lambda_i\vece_i,\quad 
\vece_i = 0 \text{ on }\partial D,
\]
where $\lambda_i>0$ for $i=1,2,$\dots are eigenvalues of $A$.
Let $S_n:=\text{span}\{\vece_1,\cdots,\vece_n\}$ and 
$\Pi_n$ be the orthogonal projection from $\mL^2$ onto $S_n$, 
defined by: for $\vecv\in\mL^2$
\begin{equation}\label{eq: Pi_n}
\iprod{\Pi_n\vecv,\vecphi}_{\mL^2} 
= \iprod{\vecv,\vecphi}_{\mL^2},\quad\forall \vecphi\in S_n.
\end{equation}
By taking $\vecphi = \Pi_n\vecv$ in the above equation, we obtain an upper bound for 
the projection operator $\Pi_n$ in $\mL^2$,
\begin{equation}\label{eq: boundPi_n}
\|\Pi_n\vecv\|_{\mL^2}
\leq
\|\vecv\|_{\mL^2}
\quad\forall\vecv\in S_n.
\end{equation}
We note that  $\Pi_n$ is a self-adjoint operator on $\mL^2$, indeed, 
from ~\eqref{eq: Pi_n}, for $\vecv,\vecw\in\mL^2$ 
there holds
\[
\iprod{\vecw,\Pi_n\vecv}_{\mL^2}
=
\iprod{\Pi_n\vecv,\Pi_n\vecw}_{\mL^2}
=
\iprod{\vecv,\Pi_n\vecw}_{\mL^2}.
\]

We are now looking for approximate solution 
$\vecu_n(\cdot,t)\in S_n:=\text{span}\{\vece_1,\cdots,\vece_n\}$ of equation~\eqref{eq: LLB2} satisfying
\begin{equation}\label{eq: GaLLB}
\frac{\partial \vecu_n}{\partial t}
-\kappa_1\Delta\vecu_n
-\gamma\Pi_n\bigl(\vecu_n\times\Delta\vecu_n\bigr)
+\kappa_2\Pi_n\bigl((1+\mu|\vecu_n|^2)\vecu_n\bigr)
=0,
\end{equation}
with $\vecu_n(\cdot,0) = \vecu_{0n}$, where $\vecu_{0n}\in S_n$ is an approximation 
of $\vecu_0$. 
Since equation~\eqref{eq: GaLLB} is equivalent to an ordinary differential equation in $\R^n$, 
the existence of a local solution to~\eqref{eq: GaLLB} 
is a consequence of the following lemma.
\begin{lemma}
For $n\in\N$, define the maps:
\begin{align*}
&F^1_n: S_n\ni \vecv \mapsto \Delta\vecv\in S_n,\\
&F^2_n: S_n\ni\vecv\mapsto \Pi_n(\vecv\times\Delta\vecv)\in S_n,\\
&F^3_n: S_n\ni\vecv\mapsto \Pi_n((1+\mu|\vecv|^2)\vecv)\in S_n.
\end{align*}
Then $F^1_n$ is globally Lipschitz and $F^2_n$, $F^3_n$ are locally Lipschitz.
\end{lemma}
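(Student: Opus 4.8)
The plan is to exploit that $S_n$ is finite-dimensional, so that all the $\mL^p$ norms restricted to $S_n$ are equivalent; in particular there is a constant $C_n>0$ with $\|\vecv\|_{\mL^\infty}\le C_n\|\vecv\|_{\mL^2}$ and $\|\vecv\|_{\mL^4}\le C_n\|\vecv\|_{\mL^2}$ for every $\vecv\in S_n$. This equivalence is the one real tool needed: it converts the $\mL^\infty$-type bounds demanded by the cross product and the cubic term into bounds in the $\mL^2$ norm in which we measure Lipschitz continuity. I will also use repeatedly that $\Pi_n$, being an orthogonal projection, is a contraction on $\mL^2$, together with the pointwise cross-product bound $|\veca\times\vecb|\le|\veca|\,|\vecb|$.

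For $F^1_n$ I would simply note that it is \emph{linear}: writing $\vecv=\sum_{i=1}^n c_i\vece_i$ and using $\Delta\vece_i=-\lambda_i\vece_i$ gives $F^1_n(\vecv)=-\sum_{i=1}^n c_i\lambda_i\vece_i$, whence $\|F^1_n(\vecv)\|_{\mL^2}\le\lambda_n\|\vecv\|_{\mL^2}$. A bounded linear map is globally Lipschitz, with constant $\lambda_n$.

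For $F^2_n$ the key step is the telescoping identity $\vecv\times\Delta\vecv-\vecw\times\Delta\vecw=\vecv\times\Delta(\vecv-\vecw)+(\vecv-\vecw)\times\Delta\vecw$. Applying the contraction property of $\Pi_n$, the pointwise cross-product bound, H\"older's inequality, and then the norm equivalence together with the estimate $\|\Delta\vecz\|_{\mL^2}\le\lambda_n\|\vecz\|_{\mL^2}$ just obtained, I expect to reach $\|F^2_n(\vecv)-F^2_n(\vecw)\|_{\mL^2}\le C(\|\vecv\|_{\mL^2}+\|\vecw\|_{\mL^2})\|\vecv-\vecw\|_{\mL^2}$. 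Restricting $\vecv,\vecw$ to any ball $\{\|\cdot\|_{\mL^2}\le R\}$ turns the prefactor into a constant, giving local Lipschitz continuity. For $F^3_n$ the linear part $\vecv\mapsto\Pi_n\vecv$ is a contraction, so only the cubic part $\vecv\mapsto\mu\Pi_n(|\vecv|^2\vecv)$ needs work; here the pointwise identity $|\vecv|^2\vecv-|\vecw|^2\vecw=|\vecv|^2(\vecv-\vecw)+\big((\vecv+\vecw)\cdot(\vecv-\vecw)\big)\vecw$ yields $\big||\vecv|^2\vecv-|\vecw|^2\vecw\big|\le\big(|\vecv|^2+|\vecv+\vecw|\,|\vecw|\big)|\vecv-\vecw|$, and the same contraction/H\"older/norm-equivalence chain produces $\|F^3_n(\vecv)-F^3_n(\vecw)\|_{\mL^2}\le C\big(1+\|\vecv\|_{\mL^2}^2+\|\vecw\|_{\mL^2}^2\big)\|\vecv-\vecw\|_{\mL^2}$, again local Lipschitz.

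There is no deep obstacle here---the result is a soft consequence of working in finite dimensions. The only point deserving care is conceptual rather than technical: the growth prefactors $(\|\vecv\|_{\mL^2}+\|\vecw\|_{\mL^2})$ and $(1+\|\vecv\|_{\mL^2}^2+\|\vecw\|_{\mL^2}^2)$ are genuinely present and cannot be removed, so $F^2_n$ and $F^3_n$ are only locally and not globally Lipschitz. This is precisely why the lemma yields only a \emph{local} solution of the Galerkin system~\eqref{eq: GaLLB}, which must then be continued to $[0,T]$ by means of the a priori bounds established in the remainder of Section~\ref{sec: FG}.
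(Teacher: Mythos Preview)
Your proposal is correct and follows essentially the same approach as the paper's proof: the same telescoping decompositions for $F^2_n$ and $F^3_n$, the same use of the $\Pi_n$-contraction, H\"older's inequality, and norm equivalence on the finite-dimensional space $S_n$. The only cosmetic difference is that for $F^1_n$ you exploit orthonormality to obtain the sharper Lipschitz constant $\lambda_n$, whereas the paper uses the triangle inequality and arrives at $\sum_{i=1}^n\lambda_i$.
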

\begin{proof}
For any $\vecv\in S_n$ we have 
\begin{equation*}
\vecv = \sum_{i=1}^n \inpro{\vecv}{\vece_i}_{\mL^2}\vece_i
\quad\text{and}\quad
-\Delta\vecv = \sum_{i=1}^n \lambda_i\inpro{\vecv}{\vece_i}_{\mL^2}\vece_i.
\end{equation*}
By using the triangle inequality, the orthonormal property 
of $\{\vece_i\}_{i=1}^n$ and H\"older's inequality, for any $\vecu,\vecv\in S_n$ we obtain
\begin{align*}
\|F^1_n(\vecu)-F^1_n(\vecv)\|_{\mL^2}
&=
\|\Delta\vecu-\Delta\vecv\|_{\mL^2}
=
\|\sum_{i=1}^n \lambda_i\inpro{\vecu-\vecv}{\vece_i}_{\mL^2}\vece_i\|_{\mL^2}\\
&\leq
\sum_{i=1}^n \lambda_i\bigl|\inpro{\vecu-\vecv}{\vece_i}_{\mL^2}\bigr|
\leq
\bigl(\sum_{i=1}^n \lambda_i\bigr)\|\vecu-\vecv\|_{\mL^2},
\end{align*}
then the globally Lipschitz property of $F^1_n$ follows immediately.

From~\eqref{eq: boundPi_n} and the triangle inequality, there holds
\begin{align*}
\|F^2_n(\vecu)-F^2_n(\vecv)\|_{\mL^2}
&=
\|\Pi_n(\vecu\times\Delta\vecu-\vecv\times\Delta\vecv)\|_{\mL^2}
\leq
\|\vecu\times\Delta\vecu-\vecv\times\Delta\vecv\|_{\mL^2}\\
&\leq
\|\vecu\times(\Delta\vecu-\Delta\vecv)\|_{\mL^2}
+
\|(\vecu-\vecv)\times\Delta\vecv\|_{\mL^2}\\
&\leq
\|\vecu\|_{\mL^{\infty}}
\|F^1_n(\vecu)-F^1_n(\vecv)\|_{\mL^2}
+
\|(\vecu-\vecv)\|_{\mL^2}
\|\Delta\vecv\|_{\mL^{\infty}}.
\end{align*}
Since $F^1_n$ is globally Lipschitz and 
the fact that all norms are equivalent in the finite dimensional space $S_n$, $F^2_n$ is locally Lifshitz.

Similarly, the local Lipschitz property of $F^3_n$ follows from the estimate,
\begin{align*}
\|F^3_n(\vecu)-F^3_n(\vecv)\|_{\mL^2}
&\leq
\|\vecu-\vecv\|_{\mL^2}
+
\mu\|\Pi_n(|\vecu|^2\vecu-|\vecv|^2\vecv)\|_{\mL^2}\\
&\leq
\|\vecu-\vecv\|_{\mL^2}
+
\mu\||\vecu|^2\vecu-|\vecv|^2\vecv\|_{\mL^2}\\
&\leq
\|\vecu-\vecv\|_{\mL^2}
+
\mu\||\vecu|^2(\vecu-\vecv)\|_{\mL^2}
+
\mu\|(\vecu-\vecv)\cdot(\vecu+\vecv)\,\vecv\|_{\mL^2}\\
&\leq
\bigl(
1
+
\mu\||\vecu|^2\|_{\mL^{\infty}}
+
\mu\|\vecu+\vecv\|_{\mL^{\infty}}\|\vecv\|_{\mL^{\infty}}\bigr)
\|\vecu-\vecv\|_{\mL^2},
\end{align*}
which complete the proof of this lemma.
\end{proof}
We now proceed to priori estimates on the approximate solution $\vecu_n$.
\begin{lemma}\label{lem: appSo_sta}
For each $n=1,2,$\dots and every $t\in[0,T]$,
\begin{equation*}
\|\vecu_n(t)\|_{\mL^2}^2 
+
2\kappa_1\int_0^T \|\nabla\vecu_n(t)\|_{\mL^2}^2 \dt
+
2\kappa_2
\int_0^T \bigl(\|\vecu_n(t)\|_{\mL^2}^2 + \mu\|\vecu_n(t)\|_{\mL^4}^4\bigr)\dt
\leq
\|\vecu_n(0)\|_{\mL^2}^2, 
\end{equation*}
and
\[
\|\nabla\vecu_n(t)\|_{\mL^2}^2
+
2\kappa_1\int_0^T \|\Delta\vecu_n(t)\|_{\mL^2}^2 \dt
\leq
\|\nabla\vecu_n(0)\|_{\mL^2}^2.
\]
\end{lemma}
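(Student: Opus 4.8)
The plan is to derive two energy identities by testing the Galerkin equation~\eqref{eq: GaLLB} against suitable elements of $S_n$, exploiting the fact that both the precessional cross-product term and the projection $\Pi_n$ behave favourably on $S_n$. Since $\vecu_n(t)$ is a finite linear combination of the smooth eigenfunctions $\vece_i$ with coefficients that are $C^1$ in time on the local existence interval, every manipulation below (differentiation in $t$, integration by parts, and the pointwise vector identities) is rigorously justified; the resulting uniform bounds will in particular rule out finite-time blow-up of the coefficient ODE and hence promote the local solution of the previous lemma to a global one on $[0,T]$.

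For the first estimate I would take the $\mL^2$-inner product of~\eqref{eq: GaLLB} with $\vecu_n$. The time-derivative term gives $\tfrac12\tfrac{d}{dt}\|\vecu_n\|_{\mL^2}^2$, and integrating the Laplacian term by parts (the boundary contribution vanishes because $\vecu_n\in S_n$ and each $\vece_i$ vanishes on $\partial D$) gives $\kappa_1\|\nabla\vecu_n\|_{\mL^2}^2$. The two remaining terms carry the projection $\Pi_n$, which I remove using the defining property~\eqref{eq: Pi_n} with test function $\vecu_n\in S_n$, namely $\iprod{\Pi_n\vecw,\vecu_n}_{\mL^2}=\iprod{\vecw,\vecu_n}_{\mL^2}$. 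The cross-product term then drops out entirely, since $(\vecu_n\times\Delta\vecu_n)\cdot\vecu_n=0$ pointwise, while the nonlinear term contributes exactly $\kappa_2(\|\vecu_n\|_{\mL^2}^2+\mu\|\vecu_n\|_{\mL^4}^4)$. This yields the energy identity $\tfrac12\tfrac{d}{dt}\|\vecu_n\|_{\mL^2}^2+\kappa_1\|\nabla\vecu_n\|_{\mL^2}^2+\kappa_2(\|\vecu_n\|_{\mL^2}^2+\mu\|\vecu_n\|_{\mL^4}^4)=0$; integrating in time and discarding nonnegative terms gives the first inequality.

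For the second estimate I would instead test~\eqref{eq: GaLLB} with $-\Delta\vecu_n$, which again lies in $S_n$ because $\Delta\vece_i=-\lambda_i\vece_i$, so $\Pi_n$ may once more be dropped. The time-derivative term produces $\tfrac12\tfrac{d}{dt}\|\nabla\vecu_n\|_{\mL^2}^2$ and the linear Laplacian term produces $\kappa_1\|\Delta\vecu_n\|_{\mL^2}^2$. The cross-product term vanishes for the same algebraic reason, now using $(\vecu_n\times\Delta\vecu_n)\cdot\Delta\vecu_n=0$. The one term requiring genuine work — and which I expect to be the crux — is the nonlinear one $\kappa_2\iprod{(1+\mu|\vecu_n|^2)\vecu_n,-\Delta\vecu_n}_{\mL^2}$: after integration by parts one must expand $\nabla\bigl((1+\mu|\vecu_n|^2)\vecu_n\bigr):\nabla\vecu_n$ and verify that it is pointwise nonnegative. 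Expanding, it equals $(1+\mu|\vecu_n|^2)|\nabla\vecu_n|^2+2\mu\sum_j(\vecu_n\cdot\partial_j\vecu_n)^2$, a sum of manifestly nonnegative contributions precisely because $\mu>0$. Hence this term can be discarded, leaving $\tfrac12\tfrac{d}{dt}\|\nabla\vecu_n\|_{\mL^2}^2+\kappa_1\|\Delta\vecu_n\|_{\mL^2}^2\le0$, which integrates to the second inequality.

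The only genuinely delicate point is this last sign computation: had $\mu$ been negative, or the longitudinal nonlinearity carried the opposite sign, one would be forced to absorb the offending term via Young's inequality and close the estimate with Gronwall's lemma, producing a constant growing exponentially in $T$ rather than the clean bound by the initial energy. The favourable sign of the nonlinearity is exactly what renders both estimates dissipative, and I would emphasise that the precessional (gyromagnetic) term contributes nothing to either energy balance.
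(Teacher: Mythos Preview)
Your proof is correct and follows essentially the same approach as the paper: testing~\eqref{eq: GaLLB} against $\vecu_n$ and against $-\Delta\vecu_n$, removing $\Pi_n$ by self-adjointness since the test functions lie in $S_n$, using the pointwise orthogonality of the cross product to kill the precessional term, and for the second estimate expanding $\nabla\bigl((1+\mu|\vecu_n|^2)\vecu_n\bigr):\nabla\vecu_n$ to exhibit the two nonnegative contributions. Your additional remarks on global existence and on the role of the sign of $\mu$ are accurate commentary but go slightly beyond what the paper records.
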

\begin{proof}
Taking the inner product of both sides of ~\eqref{eq: GaLLB} 
with $\vecu_n(t)\in S_n$, integrating by parts with respect to $\vecx$, and using 
$(\veca\times\vecb)\cdot\vecb=0$ and the fact that $\Pi_n$ is self-adjoint, 
we obtain
\begin{align*}
\frac{1}{2}\frac{\partial}{\partial t}\|\vecu_n(t)\|_{\mL^2}^2 
+
\kappa_1\|\nabla\vecu_n(t)\|_{\mL^2}^2
+
\kappa_2
\bigiprod{(1+\mu|\vecu_n|^2)\vecu_n,\vecu_n(t)}_{\mL^2} = 0.
\end{align*}
The first result follows by integrating both sides of the above equation 
with respect to $t$.

In a similar fashion, we next take the inner product of both sides of ~\eqref{eq: GaLLB} 
with $\Delta\vecu_n(t)\in S_n$, and then integrate by parts with respect to $\vecx$
to arrive at
\begin{align*}
\frac{1}{2}\frac{\partial}{\partial t}\|\nabla\vecu_n(t)\|_{\mL^2}^2 
+
\kappa_1\|\Delta\vecu_n(t)\|_{\mL^2}^2
&+
\kappa_2
\bigiprod{(1+\mu|\vecu_n|^2)\nabla\vecu_n,\nabla\vecu_n(t)}_{\mL^2}\\
&+
\kappa_2
\bigiprod{2\mu(\vecu_n\cdot\nabla\vecu_n)\vecu_n,\nabla\vecu_n(t)}_{\mL^2}
=0
\end{align*}
Integrating both sides 
with respect to $t$, 
we obtain
\begin{align*}
\|\nabla\vecu_n(t)\|_{\mL^2}^2 
+
2\kappa_1
\int_0^t
\|\Delta\vecu_n(s)\|_{\mL^2}^2\ds
&+
2\kappa_2
\int_0^t
\int_D
(1+\mu|\vecu_n|^2)(\nabla\vecu_n)^2\,d\vecx\ds\\
&+
2\kappa_2\mu
\int_0^t
\int_D
(\vecu_n\cdot\nabla\vecu_n)^2\,d\vecx\ds
=\|\nabla\vecu_n(0)\|_{\mL^2}^2, 
\end{align*}
and the second result follows immediately.
\end{proof}

The following upper bounds for $\vecu_n\times\Delta\vecu_n$ and $(1+\mu|\vecu_n|^2)\vecu_n$ 
are a consequence of Lemma~\ref{lem: appSo_sta}.
\begin{lemma}\label{lem: bound_un1}
There exists  a  constant $C$, which does not depend on 
$n=1,2,$\dots, such that
\begin{align*}
\int_0^T\|\vecu_n(t)\times\Delta\vecu_n(t)\|^2_{\mL^{3/2}}\dt 
\leq C\quad\text{and}\quad
\int_0^T\|(1+\mu|\vecu_n|^2(t))\vecu_n(t)\|^2_{\mL^2}\dt
\leq C.
\end{align*}
\end{lemma}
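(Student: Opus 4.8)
The plan is to read off from Lemma~\ref{lem: appSo_sta} the two uniform-in-$n$ bounds that matter here, and then control each nonlinear term by a single application of H\"older's inequality together with the Sobolev embedding $\mH^1\hookrightarrow\mL^6$ (valid since $d\le 3$). Concretely, the first estimate of Lemma~\ref{lem: appSo_sta} gives $\sup_{t\in[0,T]}\|\vecu_n(t)\|_{\mL^2}^2\le C$ and the second gives $\sup_{t\in[0,T]}\|\nabla\vecu_n(t)\|_{\mL^2}^2\le C$, so that $\vecu_n$ is bounded in $L^\infty(0,T;\mH^1)$ uniformly in $n$; the second estimate also yields $\int_0^T\|\Delta\vecu_n(t)\|_{\mL^2}^2\dt\le C$. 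By Sobolev embedding this $\mH^1$ bound upgrades to $\sup_{t\in[0,T]}\|\vecu_n(t)\|_{\mL^6}\le C$, which is the only ingredient beyond Lemma~\ref{lem: appSo_sta} that I will need.

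For the first inequality I would note that the H\"older pairing $\tfrac16+\tfrac12=\tfrac23$ gives, pointwise in $t$,
\[
\|\vecu_n(t)\times\Delta\vecu_n(t)\|_{\mL^{3/2}}\le\|\vecu_n(t)\|_{\mL^6}\,\|\Delta\vecu_n(t)\|_{\mL^2}.
\]
Squaring, integrating in $t$, and pulling the $\mL^6$ factor out as its time-supremum then yields
\[
\int_0^T\|\vecu_n(t)\times\Delta\vecu_n(t)\|_{\mL^{3/2}}^2\dt
\le\Big(\sup_{t\in[0,T]}\|\vecu_n(t)\|_{\mL^6}^2\Big)\int_0^T\|\Delta\vecu_n(t)\|_{\mL^2}^2\dt\le C,
\]
both factors being the uniform bounds already established. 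This is precisely why the target space for this term is $\mL^{3/2}$ rather than $\mL^2$: since $\Delta\vecu_n$ is controlled only in $\mL^2$ and $\vecu_n$ only in $\mL^6$, the product lands in $\mL^{3/2}$.

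For the second inequality, the triangle inequality together with $\||\vecu_n|^2\vecu_n\|_{\mL^2}=\|\vecu_n\|_{\mL^6}^3$ gives
\[
\|(1+\mu|\vecu_n|^2)\vecu_n\|_{\mL^2}\le\|\vecu_n\|_{\mL^2}+\mu\|\vecu_n\|_{\mL^6}^3,
\]
so after squaring and integrating in $t$ each resulting term is bounded by $T$ times a time-supremum that is already controlled (the first by the $L^\infty(0,T;\mL^2)$ bound, the second by the uniform $\mL^6$ bound). No genuine obstacle arises in this lemma; the only point requiring care is the bookkeeping of exponents --- matching each factor to exactly the norm in which Lemma~\ref{lem: appSo_sta} supplies a uniform bound, and distributing the time integrability so that one factor is taken in $L^\infty_t$ and the other in $L^2_t$.
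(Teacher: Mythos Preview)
Your proof is correct and follows essentially the same approach as the paper: both use the H\"older pairing $\tfrac16+\tfrac12=\tfrac23$ together with the Sobolev embedding $\mH^1\hookrightarrow\mL^6$ to control $\vecu_n\times\Delta\vecu_n$ in $\mL^{3/2}$, and the identity $\||\vecu_n|^2\vecu_n\|_{\mL^2}=\|\vecu_n\|_{\mL^6}^3$ for the second term, with Lemma~\ref{lem: appSo_sta} supplying the needed uniform $L^\infty(0,T;\mH^1)$ and $L^2(0,T;\mL^2)$ bounds on $\vecu_n$ and $\Delta\vecu_n$.
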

\begin{proof}
By H\"older's inequality and the Sobolev imbedding of $\mH^1$ into $\mL^6$~\cite{Friedman1969}
we have
\begin{align*}
\|\vecu_n(t)\times\Delta\vecu_n(t)\|_{\mL^{3/2}}
\leq
\|\vecu_n(t)\|_{\mL^6}
\|\Delta\vecu_n(t)\|_{\mL^2}
\leq
C
\|\vecu_n(t)\|_{\mH^1}
\|\Delta\vecu_n(t)\|_{\mL^2}.
\end{align*}
We use Lemma~\ref{lem: appSo_sta} to obtain the first result,
\begin{align*}
\int_0^T \|\vecu_n(t)\times\Delta\vecu_n(t)\|^2_{\mL^{3/2}}\dt
\leq
C\sup_{t\in[0,T]}\|\vecu_n(t)\|_{\mH^1}^2
\int_0^T
\|\Delta\vecu_n(t)\|_{\mL^2}^2\dt
\leq C.
\end{align*}
Similarly, from Lemma~\ref{lem: appSo_sta} and the Sobolev imbedding of $\mH^1$ into $\mL^6$, we have
\begin{equation}\label{eq: un3}
\|\vecu_n^3(t)\|^2_{\mL^2}
=
\|\vecu_n(t)\|^6_{\mL^6}
\leq
\|\vecu_n(t)\|^6_{\mH^1}
\leq
C,
\end{equation}
so
\begin{align*}
\|\bigl(1+\mu|\vecu_n|^2(t)\bigr)\vecu_n(t)\|^2_{\mL^2}
\leq
2\|\vecu_n(t)\|^2_{\mL^2}
+
2\mu^2\|\vecu_n^3(t)\|^2_{\mL^2}
\leq
C,
\end{align*}
and the second result follows immediately.
\end{proof}

Equation~\eqref{eq: GaLLB} can be written in the following way 
as an approximation of equation~\eqref{eq: LLB2},
\begin{align}\label{eq: GaLLB2}
\vecu_n(t)
&=\vecu_n(0)
+\kappa_1\int_0^t\Delta\vecu_n\ds
+\gamma\int_0^t\Pi_n\bigl(\vecu_n\times\Delta\vecu_n\bigr)\ds
-\kappa_2\int_0^t\Pi_n\bigl((1+\mu|\vecu_n|^2)\vecu_n\bigr)\ds\\
&=\vecu_n(0) + \kappa_1\vecB_{n,1}(t)
+\gamma\vecB_{n,2}(t)
+\kappa_2\vecB_{n,3}(t)
.\nn
\end{align}

Before proving the uniform bound of $\{\vecu_n\}$, we define the following 
fractional power space~\cite[Definiton 1.4.7]{Henry1981}.
\begin{definition}\label{def: fracspace}
Put $A_1:=I+A$. For any real number $\beta>0$, we define 
the Hilbert space 
\[
X^{\beta} := \bigl\{\vecphi\in\mL^2: \|A_1^{\beta}\vecphi\|_{\mL^2}<\infty\bigr\},
\]
where 
$A_1^{\beta}\vecphi := 
\sum_{i=1}^{\infty}
(1+\lambda_i)^{\beta}
\inpro{\vecphi}{\vece_i}_{\mL^2}\vece_i,$ 
with the graph norm $\|\cdot\|_{X^{\beta}}=\|A_1^{\beta}\cdot\|_{\mL^2}$. 
The dual space of $X^{\beta}$ is denoted by $X^{-\beta}$.
\end{definition}
The following lemma states an upper bound for 
the projection operator $\Pi_n$ in $X^{-\beta}$.
\begin{lemma}\label{lem: boundPi_n}
For any $\beta>0$ and $\vecv\in\mL^2$ there holds
\[
\|\Pi_n\vecv\|_{X^{-\beta}}\leq \|\vecv\|_{X^{-\beta}}.
\]
\end{lemma}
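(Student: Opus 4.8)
The plan is to diagonalize everything in the orthonormal eigenbasis $\{\vece_i\}$, in which $A_1 = I+A$, the projection $\Pi_n$, and the fractional powers $A_1^{\pm\beta}$ all act simultaneously as multipliers on the coefficients. Writing $\vecv = \sum_i v_i\vece_i$ with $v_i = \inpro{\vecv}{\vece_i}_{\mL^2}$, Definition~\ref{def: fracspace} gives $\|\vecphi\|_{X^{\beta}}^2 = \sum_i (1+\lambda_i)^{2\beta}|\phi_i|^2$, so $X^{\beta}$ is a weighted $\ell^2$ sequence space. The whole estimate will then reduce to the monotonicity of a nonnegative series under truncation.

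First I would pin down the dual norm. Using the $\mL^2$ inner product as the pivot pairing between $X^{\beta}$ and $X^{-\beta}$, the dual norm of $\vecv$ is
\[
\|\vecv\|_{X^{-\beta}} = \sup_{\veczero\ne\vecphi\in X^{\beta}}\frac{|\inpro{\vecv}{\vecphi}_{\mL^2}|}{\|\vecphi\|_{X^{\beta}}}.
\]
Writing $\inpro{\vecv}{\vecphi}_{\mL^2} = \sum_i v_i\phi_i = \sum_i \bigl((1+\lambda_i)^{-\beta}v_i\bigr)\bigl((1+\lambda_i)^{\beta}\phi_i\bigr)$ and applying the Cauchy--Schwarz inequality (sharp for the optimizing $\vecphi$) yields the clean formula
\[
\|\vecv\|_{X^{-\beta}}^2 = \sum_{i=1}^{\infty}(1+\lambda_i)^{-2\beta}|v_i|^2 = \|A_1^{-\beta}\vecv\|_{\mL^2}^2,
\]
where $A_1^{-\beta}$ is the multiplier acting by $(1+\lambda_i)^{-\beta}$ on the $i$-th coefficient.

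Given this, the lemma is immediate. Since $\Pi_n$ merely truncates the expansion to $i\le n$, it commutes with $A_1^{-\beta}$ and is an orthogonal projection on $\mL^2$, so
\[
\|\Pi_n\vecv\|_{X^{-\beta}} = \|A_1^{-\beta}\Pi_n\vecv\|_{\mL^2} = \|\Pi_n A_1^{-\beta}\vecv\|_{\mL^2} \le \|A_1^{-\beta}\vecv\|_{\mL^2} = \|\vecv\|_{X^{-\beta}}.
\]
Equivalently, at the level of coefficients $\|\Pi_n\vecv\|_{X^{-\beta}}^2 = \sum_{i=1}^{n}(1+\lambda_i)^{-2\beta}|v_i|^2 \le \sum_{i=1}^{\infty}(1+\lambda_i)^{-2\beta}|v_i|^2 = \|\vecv\|_{X^{-\beta}}^2$, since discarding the nonnegative tail only decreases the sum.

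The only genuinely delicate point is the identification of the dual norm with $\|A_1^{-\beta}\cdot\|_{\mL^2}$: one must verify that the supremum defining $\|\cdot\|_{X^{-\beta}}$ is correctly represented through the $\mL^2$-pivot duality and that the Cauchy--Schwarz optimizer indeed lies in $X^{\beta}$. This is exactly the standard Gelfand-triple structure $X^{\beta}\hookrightarrow\mL^2\hookrightarrow X^{-\beta}$ for fractional powers of a positive self-adjoint operator, and once it is in place the bound follows purely from the monotonicity of the coefficient sum; everything else is routine.
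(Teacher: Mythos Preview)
Your proof is correct. You take a somewhat different route from the paper: you explicitly identify $\|\vecv\|_{X^{-\beta}}$ with the weighted $\ell^2$ expression $\|A_1^{-\beta}\vecv\|_{\mL^2}$, after which the bound reduces to truncating a nonnegative series. The paper instead keeps the dual norm as a supremum over unit vectors in $X^{\beta}$, uses the self-adjointness of $\Pi_n$ in $\mL^2$ to move the projection onto the test function, establishes $\|\Pi_n\vecw\|_{X^{\beta}}\le\|\vecw\|_{X^{\beta}}$ by the same coefficient truncation, and then concludes by enlarging the supremum. Your approach is more direct once the dual-norm formula is in hand (and you correctly flag that identification as the only point requiring care), while the paper's argument is the abstract ``adjoint of a contraction is a contraction'' pattern that never writes down $\|A_1^{-\beta}\cdot\|_{\mL^2}$ explicitly; both ultimately rest on the same monotonicity of the coefficient sums under truncation.
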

\begin{proof}
The proof of this lemma can be found in~\cite{ZdzisLiang2014};  
for the reader's convenience we recall the proof as follows. 

For $\vecv\in\mL^2$, by using~\eqref{eq: Pi_n} we obtain
\begin{align}\label{eq: bound1}
\|\Pi_n\vecv\|_{X^{-\beta}}
&=
\sup_{\|\vecw\|_{X^{\beta}}\leq 1}
\left|_{X^{-\beta}}\!\iprod{\Pi_n\vecv,\vecw}_{X^{\beta}}\right|
=
\sup_{\|\vecw\|_{X^{\beta}}\leq 1}
\left|\iprod{\Pi_n\vecv,\vecw}_{\mL^2}\right|\nn\\
&=
\sup_{\|\vecw\|_{X^{\beta}}\leq 1}
\left|\iprod{\vecv,\Pi_n\vecw}_{\mL^2}\right|.
\end{align}
Since 
\[\|\Pi_n\vecw\|^2_{X^{\beta}} = \sum_{i=1}^{n}
(1+\lambda_i)^{2\beta}
\inpro{\vecw}{\vece_i}_{\mL^2}^2
\leq
\sum_{i=1}^{\infty}
(1+\lambda_i)^{2\beta}
\inpro{\vecw}{\vece_i}_{\mL^2}^2
=
\|\vecw\|^2_{X^{\beta}},
\]
the set $\{\vecw\in X^{\beta}: \|\vecw\|_{X^{\beta}}\leq 1 \}$ 
is a subset of the set 
$\{\vecw\in X^{\beta}: \|\Pi_n\vecw\|_{X^{\beta}}\leq 1 \}$. Hence, 
from~\eqref{eq: bound1} there holds
\[
\|\Pi_n\vecv\|_{X^{-\beta}}
\leq
\sup_{\|\Pi_n\vecw\|_{X^{\beta}}\leq 1}
\left|\iprod{\vecv,\Pi_n\vecw}_{\mL^2}\right|
\leq
\|\vecv\|_{X^{-\beta}},
\]
which completes the proof of the lemma.
\end{proof}

We now prove a uniform bound for $\{\vecu_n\}$ in $H^1(0,T;X^{-\beta})$.
\begin{lemma}\label{lem: bound_un2}
Let $D\subset\R^d$ be an open bounded domain with the $C^m$ extension property.  
Given $\beta>\frac{d}{6m}$, there exists  a  
constant $C$, which does not depend on 
$n$ such that
\begin{align}
&\|\vecB_{n,2}\|_{H^1(0,T;X^{-\beta})} \leq C,\label{eq: bound4}\\
&\|\vecB_{n,3}\|_{H^1(0,T;\mL^2)} \leq C,\label{eq: bound5}
\end{align}
and 
\begin{equation}\label{eq: bound6}
\|\vecu_n\|_{H^1(0,T;X^{-\beta})} \leq C,
\end{equation}
with $\vecB_{n,2}$ and $\vecB_{n,3}$ are defined in~\eqref{eq: GaLLB2}.
\end{lemma}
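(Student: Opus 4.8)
The plan is to treat the three estimates \eqref{eq: bound4}, \eqref{eq: bound5} and \eqref{eq: bound6} in turn, reducing each to the uniform bounds already established in Lemmas~\ref{lem: appSo_sta} and~\ref{lem: bound_un1} together with the mapping properties of $\Pi_n$. The common mechanism is that each $\vecB_{n,j}$ is a time antiderivative, $\vecB_{n,j}(t)=\int_0^t\vecg_{n,j}(s)\ds$, so that $\partial_t\vecB_{n,j}=\vecg_{n,j}$ and, for any Banach space $Y$, $\|\vecB_{n,j}\|_{H^1(0,T;Y)}^2=\int_0^T\|\vecB_{n,j}(t)\|_Y^2\dt+\int_0^T\|\vecg_{n,j}(t)\|_Y^2\dt$. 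Moreover $\sup_{t\in[0,T]}\|\vecB_{n,j}(t)\|_Y\le\sqrt T\,\|\vecg_{n,j}\|_{L^2(0,T;Y)}$, so in each case it suffices to control the ``derivative'' part $\|\vecg_{n,j}\|_{L^2(0,T;Y)}$, the bound on $\vecB_{n,j}$ itself then being automatic.

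For \eqref{eq: bound5} I would take $Y=\mL^2$ and $\vecg_{n,3}=\Pi_n\bigl((1+\mu|\vecu_n|^2)\vecu_n\bigr)$. Since $\Pi_n$ is the orthogonal $\mL^2$-projection, $\|\Pi_n\vecv\|_{\mL^2}\le\|\vecv\|_{\mL^2}$ for every $\vecv\in\mL^2$, whence $\int_0^T\|\vecg_{n,3}\|_{\mL^2}^2\dt\le\int_0^T\|(1+\mu|\vecu_n|^2)\vecu_n\|_{\mL^2}^2\dt\le C$ by the second estimate of Lemma~\ref{lem: bound_un1}. For \eqref{eq: bound4} I would take $Y=X^{-\beta}$ and $\vecg_{n,2}=\Pi_n(\vecu_n\times\Delta\vecu_n)$. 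Here Lemma~\ref{lem: boundPi_n} gives $\|\vecg_{n,2}\|_{X^{-\beta}}\le\|\vecu_n\times\Delta\vecu_n\|_{X^{-\beta}}$, and the one genuinely new ingredient is the continuous embedding $\mL^{3/2}\hookrightarrow X^{-\beta}$. By duality this is equivalent to $X^{\beta}\hookrightarrow\mL^3=(\mL^{3/2})^*$, and it is exactly here that the hypotheses $d<2m$, the $C^m$ extension property, and the lower bound $\beta>d/(6m)$ enter, through the Sobolev/fractional-power embedding recorded in the Appendix. Granting this, $\|\vecg_{n,2}\|_{X^{-\beta}}\le C\|\vecu_n\times\Delta\vecu_n\|_{\mL^{3/2}}$, and the first estimate of Lemma~\ref{lem: bound_un1} yields $\int_0^T\|\vecg_{n,2}\|_{X^{-\beta}}^2\dt\le C$.

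Finally, \eqref{eq: bound6} follows by summing the decomposition \eqref{eq: GaLLB2}, $\vecu_n=\vecu_n(0)+\kappa_1\vecB_{n,1}+\gamma\vecB_{n,2}+\kappa_2\vecB_{n,3}$, in $H^1(0,T;X^{-\beta})$. The constant-in-time term $\vecu_n(0)$ contributes only its $X^{-\beta}$-norm, controlled by $\|\vecu_n(0)\|_{\mL^2}$ via the elementary bound $\|\vecv\|_{X^{-\beta}}\le\|\vecv\|_{\mL^2}$ (which holds since $(1+\lambda_i)^{-\beta}\le1$ for every $i$). The term $\kappa_1\vecB_{n,1}$ has derivative $\Delta\vecu_n$, and the same elementary bound with Lemma~\ref{lem: appSo_sta} gives $\int_0^T\|\Delta\vecu_n\|_{X^{-\beta}}^2\dt\le\int_0^T\|\Delta\vecu_n\|_{\mL^2}^2\dt\le C$. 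The remaining two terms are handled by \eqref{eq: bound4} and by \eqref{eq: bound5} (the latter using $\mL^2\hookrightarrow X^{-\beta}$, so that $H^1(0,T;\mL^2)\hookrightarrow H^1(0,T;X^{-\beta})$). Adding the four contributions yields \eqref{eq: bound6}. I expect the embedding $\mL^{3/2}\hookrightarrow X^{-\beta}$ to be the only delicate point; once it is in hand, the rest is bookkeeping with the antiderivative identity and the contraction property of $\Pi_n$.
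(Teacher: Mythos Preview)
Your proposal is correct and follows essentially the same route as the paper: the embedding $X^\beta\hookrightarrow\mL^3$ (hence $\mL^{3/2}\hookrightarrow X^{-\beta}$) from Lemma~\ref{lem: Ap1}, Lemma~\ref{lem: boundPi_n} for $\Pi_n$ on $X^{-\beta}$, and Lemmas~\ref{lem: appSo_sta}--\ref{lem: bound_un1} for the uniform bounds. Your antiderivative observation $\sup_t\|\vecB_{n,j}(t)\|_Y\le\sqrt T\,\|\vecg_{n,j}\|_{L^2(0,T;Y)}$ is a slight streamlining over the paper, which instead commutes $\Pi_n$ with the time integral via Fubini before applying Minkowski and H\"older; one small correction is that the hypothesis $d<2m$ plays no role in this lemma (only $\beta>d/(6m)$ and the $C^m$ extension property are used for the embedding).
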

\begin{proof}
Since $\beta>\frac{d}{6m}$, by using Lemma~\ref{lem: Ap1} we infer that
$X^{\beta}$ is continuously embedded in $\mL^3$. 
Thus we have the  continuous imbedding 
\begin{equation}\label{eq: embed1}
\mL^{3/2}\hookrightarrow X^{-\beta}.  
\end{equation}

\underline{Proof of~\eqref{eq: bound4}:} 
By using Lemma~\ref{lem: boundPi_n},~\eqref{eq: embed1} and the first result of Lemma~\ref{lem: bound_un1} 
we deduce
\begin{align}\label{eq: bound2}
\int_0^T\|\frac{\partial}{\partial t}\vecB_{n,2}(t)\|^2_{X^{-\beta}}\dt
&=
\int_0^T\|\Pi_n\bigl(\vecu_n(t)\times\Delta\vecu_n(t)\bigr)\|^2_{X^{-\beta}}\dt\nn\\
&\leq
C
\int_0^T\|\vecu_n(t)\times\Delta\vecu_n(t)\|^2_{X^{-\beta}}\dt\nn\\
&\leq
C
\int_0^T\|\vecu_n(t)\times\Delta\vecu_n(t)\|^2_{\mL^{3/2}}\dt
\leq
C.
\end{align}
In the same maner, we estimate $\vecB_{n,2}$ in the norm of $L^2(0,T;X^{-\beta})$ 
as follows. 
Since $\vece_i\in\mL^{\infty}$ for $i=1,\cdots,n$, we see from Lemma~\ref{lem: appSo_sta} that
\[
\int_0^t\int_D\bigg|\bigl(\vecu_n(s)\times\Delta\vecu_n(s)\bigr)\cdot\vece_i\bigg|\dvx\ds
\leq
\|\vece_i\|_{\mL^{\infty}}
\|\vecu_n\|_{L^2(0,T;\mL^2)}
\|\Delta\vecu_n\|_{L^2(0,T;\mL^2)}
< \infty,
\]
and thus from Fubini's theorem there holds
\begin{equation}\label{eq: Fubi1}
\int_0^t \Pi_n\bigl(\vecu_n(s)\times\Delta\vecu_n(s)\bigr)\ds
=
 \Pi_n\bigg(\int_0^t\vecu_n(s)\times\Delta\vecu_n(s)\ds\bigg).
\end{equation}
By using~\eqref{eq: Fubi1}, Lemma~\ref{lem: boundPi_n},~\eqref{eq: embed1} 
and Minkowski's inequality, 
we deduce
\begin{align*}
\int_0^T\|\vecB_{n,2}(t)\|^2_{X^{-\beta}}\dt
&=
\int_0^T\bigg\|\int_0^t\Pi_n\bigl(\vecu_n(s)\times\Delta\vecu_n(s)\bigr)\ds\bigg\|^2_{X^{-\beta}}\dt\\
&=
\int_0^T\bigg\|\Pi_n\bigg(\int_0^t\vecu_n(s)\times\Delta\vecu_n(s)\ds\bigg)\bigg\|^2_{X^{-\beta}}\dt\\
&\leq
\int_0^T\bigg\|\int_0^t\vecu_n(s)\times\Delta\vecu_n(s)\ds\bigg\|^2_{X^{-\beta}}\dt\\
&\leq
\int_0^T\bigg\|\int_0^t\vecu_n(s)\times\Delta\vecu_n(s)\ds\bigg\|^2_{\mL^{3/2}}\dt\\
&\leq
\int_0^T\bigg(\int_0^t\|\vecu_n(s)\times\Delta\vecu_n(s)\|_{\mL^{3/2}}\ds\bigg)^2\dt.
\end{align*}
Thus, it follows from H\"older's inequality and  the first result of Lemma~\ref{lem: bound_un1} that
\begin{align}\label{eq: bound3}
\int_0^T\|\vecB_{n,2}(t)\|^2_{X^{-\beta}}\dt
\leq
\int_0^T t\int_0^t\|\vecu_n(s)\times\Delta\vecu_n(s)\|^2_{\mL^{3/2}}\ds\dt
\leq
\int_0^T tC\dt = CT^2.
\end{align}
The first result~\eqref{eq: bound4} follows immediately 
from~\eqref{eq: bound2} and~\eqref{eq: bound3}.

\underline{Proof of~\eqref{eq: bound5}:} Using the same technique as in the proof of~\eqref{eq: bound4}, 
we prove~\eqref{eq: bound5} as follows.

From~\eqref{eq: boundPi_n} and the second result of Lemma~\ref{lem: bound_un1}, we deduce
\begin{align}\label{eq: bound7}
\int_0^T\|\frac{\partial}{\partial t}\vecB_{n,3}(t)\|^2_{\mL^2}\dt
&=
\int_0^T\|\Pi_n\bigl((1+\mu|\vecu_n|^2(t))\vecu_n(t)\bigr)\|^2_{\mL^2}\dt\nn\\
&\leq
\int_0^T\|(1+\mu|\vecu_n|^2(t))\vecu_n(t)\|^2_{\mL^2}\dt
\leq
C.
\end{align}
Since $\vece_i\in\mL^2$ for $i=1,\cdots,n$, we see from Lemma~\ref{lem: bound_un1} that
\[
\int_0^t\int_D\bigg|\bigl(1+\mu|\vecu_n(s)|^2\bigr)\vecu_n(s)\cdot\vece_i\bigg|\dvx\ds
\leq
t^{1/2}\|\vece_i\|_{\mL^2}
\|\bigl(1+\mu|\vecu_n|^2\bigr)\vecu_n\|_{L^2(0,T;\mL^2)}
< \infty,
\]
and thus from Fubini's theorem there holds
\begin{equation}\label{eq: Fubi2}
\int_0^t \Pi_n\bigg(\bigl(1+\mu|\vecu_n(s)|^2\bigr)\vecu_n(s)\bigg)\ds
=
 \Pi_n\bigg(\int_0^t\bigl(1+\mu|\vecu_n(s)|^2\bigr)\vecu_n(s)\ds\bigg).
\end{equation}
By using~\eqref{eq: Fubi2} and~\eqref{eq: boundPi_n},the  Minkowski and  H\"older inequalities, and
the second result of Lemma~\ref{lem: bound_un1},
we infer that
\begin{align}\label{eq: bound8}
\int_0^T\|\vecB_{n,3}(t)\|^2_{\mL^2}\dt
&=
\int_0^T\|\int_0^t\Pi_n\bigl((1+\mu|\vecu_n|^2(s))\vecu_n(s)\bigr)\ds\|^2_{\mL^2}\dt\nn\\
&=
\int_0^T\|\Pi_n\bigl(\int_0^t(1+\mu|\vecu_n|^2(s))\vecu_n(s)\ds\bigr)\|^2_{\mL^2}\dt\nn\\
&\leq
\int_0^T\|\int_0^t(1+\mu|\vecu_n|^2(s))\vecu_n(s)\ds\|^2_{\mL^2}\dt\nn\\
&\leq
\int_0^T\bigl(\int_0^t\|(1+\mu|\vecu_n|^2(s))\vecu_n(s)\|_{\mL^2}\ds\bigr)^2\dt\nn\\
&\leq
\int_0^T t\int_0^t\|(1+\mu|\vecu_n|^2(s))\vecu_n(s)\|^2_{\mL^2}\ds\dt\nn\\
&\leq
\int_0^T tC\dt = CT^2.
\end{align}
Thus,~\eqref{eq: bound5} follows from~\eqref{eq: bound7} and~\eqref{eq: bound8}.

\underline{Proof of~\eqref{eq: bound6}:} 
From Lemma~\ref{lem: appSo_sta}, $\Delta\vecu_n$ is uniformly bounded in 
$L^2\bigl(0,T;\mL^2\bigr)$. By using the same arguments as in the proof of~\eqref{eq: bound5}, 
we also deduce 
\begin{equation}\label{eq: bound9}
\|\vecB_{n,1}\|_{H^1(0,T;\mL^2)}
\leq C.
\end{equation}
Since $\mL^2\hookrightarrow \mL^{3/2}$ we see from~\eqref{eq: embed1} that  
$\mL^2\hookrightarrow X^{-\beta}$ and thus 
$H^1\bigl(0,T;\mL^2\bigr)\hookrightarrow H^1\bigl(0,T;X^{-\beta}\bigr)$. 
It follows from~\eqref{eq: bound9} and~\eqref{eq: bound5} that $\vecB_{n,1}$ 
and $\vecB_{n,3}$ are uniformly bounded in $H^1\bigl(0,T;X^{-\beta}\bigr)$.
Together with~\eqref{eq: bound4} we have 
\[
\|\vecu_n\|_{H^1(0,T;X^{-\beta})} \leq C,
\]
which complete the proof of this lemma.
\end{proof}
\section{Existence of a weak solution}\label{sec: Exist}
In this section, by using the method of compactness, 
we show that there is a subsequence of $\{\vecu_n\}$ whose limit 
is a weak solution of~\eqref{eq: LLB2}.

Firstly, in the following lemma we prove the existence of 
a convergent subsequence of 
$\vecu_n$ in a functional space.
\begin{lemma}\label{lem: conversub}
Let $D\subset\R^d$ be an open bounded domain with the $C^m$ extension property 
and let $\vecu_n$ be
the solution of~\eqref{eq: GaLLB}
for $n=1,2,$\dots. Assume that $d<2m$, then 
there exist a subsequence of $\{\vecu_n\}$ 
(still denoted by $\{\vecu_n\}$) and 
$\vecu\in C([0,T];X^{-\bar\beta })\cap L^{\bar p}(0,T;\mL^4)$ such that
\begin{equation}\label{eq: converu}
\vecu_n \goto \vecu \text { strongly in }
\C([0,T];X^{-\bar\beta })\cap L^{\bar p}(0,T;\mL^4),
\end{equation}
where $\bar\beta >\frac{d}{6m}$ and $\bar p\geq 4$. Furthermore, 
\begin{equation}\label{eq: converu2}
\vecu_n \goto \vecu \text { weakly in }
L^2(0,T;\mH^1).
\end{equation}
\end{lemma}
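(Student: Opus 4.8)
The plan is to combine the uniform bounds of Lemmas~\ref{lem: appSo_sta} and~\ref{lem: bound_un2} with a compactness argument of Aubin--Lions--Simon type. First I would record what the a priori estimates give: by Lemma~\ref{lem: appSo_sta} the sequence $\{\vecu_n\}$ is bounded in $L^{\infty}(0,T;\mH^1)$ (hence in $L^{\bar p}(0,T;\mH^1)$ for every finite $\bar p$) and in $L^2(0,T;\mH^2)$, while~\eqref{eq: bound6} of Lemma~\ref{lem: bound_un2} bounds $\{\vecu_n\}$ in $H^1(0,T;X^{-\beta})$ for any fixed $\beta>\frac{d}{6m}$; in particular $\{\partial_t\vecu_n\}$ is bounded in $L^2(0,T;X^{-\beta})$. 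By reflexivity I may extract a subsequence (not relabelled) and a limit $\vecu$ with $\vecu_n\rightharpoonup\vecu$ weakly in $L^2(0,T;\mH^1)$, which already yields~\eqref{eq: converu2}, together with $\partial_t\vecu_n\rightharpoonup\partial_t\vecu$ weakly in $L^2(0,T;X^{-\beta})$.

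For the strong convergence in $L^{\bar p}(0,T;\mL^4)$ I would invoke the Aubin--Lions--Simon lemma with the triple $\mH^1\hookrightarrow\hookrightarrow\mL^4\hookrightarrow X^{-\beta}$. The first embedding is compact: since $d<2m$ forces $d\le 3$, the Rellich--Kondrachov theorem gives $\mH^1\hookrightarrow\hookrightarrow\mL^4$ on the bounded domain $D$; the second is continuous because $\mL^4\hookrightarrow\mL^{3/2}\hookrightarrow X^{-\beta}$ by~\eqref{eq: embed1}. As $\{\vecu_n\}$ is bounded in $L^{\bar p}(0,T;\mH^1)$ and $\{\partial_t\vecu_n\}$ in $L^2(0,T;X^{-\beta})\subset L^1(0,T;X^{-\beta})$, the lemma yields relative compactness of $\{\vecu_n\}$ in $L^{\bar p}(0,T;\mL^4)$ for every finite $\bar p\ge 4$, so after passing to a further subsequence $\vecu_n\to\vecu$ strongly there.

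For the strong convergence in $C([0,T];X^{-\bar\beta})$, with $\bar\beta>\beta$ fixed, I would argue by Arzel\`a--Ascoli in $X^{-\bar\beta}$. The bound~\eqref{eq: bound6} together with the one-dimensional Sobolev embedding $H^1(0,T)\hookrightarrow C^{0,1/2}([0,T])$ shows that $\{\vecu_n\}$ is bounded in $C^{0,1/2}([0,T];X^{-\beta})$; this provides both a uniform pointwise bound of $\{\vecu_n(t)\}$ in $X^{-\beta}$ and uniform equicontinuity in $X^{-\bar\beta}$. Because the eigenvalues $\lambda_i$ of $A$ tend to infinity, the embedding $X^{-\beta}\hookrightarrow\hookrightarrow X^{-\bar\beta}$ is compact, so $\{\vecu_n(t)\}$ is relatively compact in $X^{-\bar\beta}$ for each $t$. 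Arzel\`a--Ascoli then yields relative compactness of $\{\vecu_n\}$ in $C([0,T];X^{-\bar\beta})$; a further subsequence converges strongly, and uniqueness of distributional limits identifies this limit with the $\vecu$ found above. A diagonal subsequence then delivers~\eqref{eq: converu} and~\eqref{eq: converu2} simultaneously.

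I expect the main obstacle to be the clean bookkeeping of the function-space triple — specifically verifying the compact embedding $\mH^1\hookrightarrow\hookrightarrow\mL^4$ and the continuous embedding $\mL^4\hookrightarrow X^{-\beta}$ under the hypothesis $d<2m$, so that the Aubin--Lions--Simon machinery genuinely applies, and choosing $\bar\beta>\beta>\frac{d}{6m}$ consistently. The remaining step of checking that all three extracted limits coincide with a single $\vecu$ is routine once the weak limit in $L^2(0,T;\mH^1)$ is fixed.
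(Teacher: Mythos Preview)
Your proposal is correct and reaches the same conclusion, but by a genuinely different route than the paper. The paper does not invoke Aubin--Lions--Simon or Arzel\`a--Ascoli directly; instead it first embeds $H^1(0,T;X^{-\beta})$ into the fractional-in-time space $W^{\alpha,p}(0,T;X^{-\beta})$ via Lemma~\ref{lem: Ap2}, and then applies the Flandoli--Gatarek compactness lemmas (Lemmas~\ref{lem: Ap3} and~\ref{lem: Ap4}) to the chain $\mH^1=X^{1/2}\hookrightarrow X^{\gamma}\hookrightarrow X^{-\beta}$ with an intermediate exponent $\gamma\in[-\beta,\tfrac12)$. The final passage from $X^{\gamma}$ to $\mL^4$ is handled by Lemma~\ref{lem: Ap1}, and it is exactly the requirement $\tfrac{d}{4m}=\tfrac{d(q-2)}{2mq}<\gamma<\tfrac12$ (with $q=4$) that produces the hypothesis $d<2m$. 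Your argument is more elementary---it bypasses the $W^{\alpha,p}$ machinery entirely---but leans on the classical Rellich--Kondrachov embedding $\mH^1\hookrightarrow\hookrightarrow\mL^4$, which needs $d\le 3$; your claim that ``$d<2m$ forces $d\le 3$'' is true here only because $A=-\Delta$ has $D(A)=X^1\hookrightarrow\mH^2$, so the $m$ appearing in Lemma~\ref{lem: Ap1} is effectively $2$. You should make that identification explicit. The paper's route keeps the role of $m$ transparent and would extend verbatim to higher-order sectorial operators, whereas yours is shorter and more self-contained for the Laplacian at hand.
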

\begin{proof}
From~\eqref{eq: bound6}, the sequence $\{\vecu_n\}_n$ is uniformly bounded in 
$H^1(0,T;X^{-\beta})$ with given $\beta>\frac{d}{6m}$. 
For each 
$p\in[2,\infty)$, thanks to Lemma~\ref{lem: Ap2} we have the continuous 
imbeddings
\[
H^1(0,T;X^{-\beta})\hookrightarrow \mW^{\alpha,p}(0,T;X^{-\beta})
\quad\text{ if }\alpha\in(0,\tfrac12)\text{ and } \frac{1}{2}>\alpha-\frac{1}{p},
\]
so by Lemma~\ref{lem: appSo_sta} the sequence
$\{\vecu_n\}_n$ is uniformly bounded in 
$ W^{\alpha,p}(0,T;X^{-\beta})\cap L^p(0,T;\mH^1)$.

From~\cite[Theorem 1.4.8]{Henry1981} , $X^{\nu}$ is compactly embedded in 
$X^{\nu'}$ whenever $\nu$ and $\nu'$ are real numbers with $\nu>\nu'$.
Since $\mH^1 = X^{1/2}$, there exists $\gamma\in[-\beta,\tfrac12)$ such that 
the embeddings 
\[
 \mH^1\hookrightarrow X^\gamma \hookrightarrow X^{-\beta}
 \,\text{ are compact.}
\]
By using Lemmas~\ref{lem: Ap3}--\ref{lem: Ap4}, we deduce the compact embeddings
\begin{align}
W^{\alpha,p}(0,T;X^{-\beta})\cap L^p(0,T;\mH^1)
&\hookrightarrow 
L^p(0,T;X^{\gamma}),\label{eq: embed2}\\
\mW^{\alpha,p}(0,T;X^{-\beta})
&\hookrightarrow
C([0,T];X^{-\bar\beta})
\,\text{if }\bar\beta>\beta \text{ and } \alpha p>1.\label{eq: embed4}
\end{align}
From Lemma~\ref{lem: Ap1}, $X^{\gamma}$ is continuously embedded in $\mL^q$ 
when $\gamma>\frac{d(q-2)}{2mq}$, so
\begin{equation}\label{eq: embed3}
L^p(0,T;X^{\gamma})
\hookrightarrow
L^p(0,T;\mL^q)\quad
\text{when } \gamma>\frac{d(q-2)}{2mq}.
\end{equation}
It follows from~\eqref{eq: embed2},~\eqref{eq: embed4} and~\eqref{eq: embed3} that
if 
\begin{equation}\label{eq: cond}
\bar\beta>\beta>\frac{d}{6m},
\quad
\frac{1}{2}>\alpha-\frac{1}{p}>0
\quad
\text{and}\quad \frac{d(q-2)}{2mq}<\frac{1}{2},
\end{equation}
then the embedding
\[
W^{\alpha,p}(0,T;X^{-\beta})\cap L^p(0,T;\mH^1)
\hookrightarrow C([0,T];X^{-\bar\beta})\cap L^p(0,T;\mL^q)
\quad\text{is compact.}
\]
In what follows, we choose $p=\bar p \geq 4,q=4,\bar\beta >\frac{d}{6m}$. Thus, with the assumption 
$d<2m$ the condition~\eqref{eq: cond} holds.
It follows that there exist a subsequence of $\{\vecu_n\}$ 
(still denoted by $\{\vecu_n\}$) and 
$\vecu\in C([0,T];X^{-\bar\beta })\cap L^{\bar p}(0,T;\mL^4)$ such that
\begin{equation*}
\vecu_n \goto \vecu \text { strongly in }
\C([0,T];X^{-\bar\beta })\cap L^{\bar p}(0,T;\mL^4).
\end{equation*}
Furthermore, from Lemma~\ref{lem: appSo_sta}, the sequence $\{\vecu_n\}_n$ is uniformly bounded in 
$L^2(0,T;\mH^1)$. Thus, there exists a subsequence of $\{\vecu_n\}$ 
(still denoted by $\{\vecu_n\}$) such that
\begin{equation*}
\vecu_n \goto \vecu \text { weakly in }
L^2(0,T;\mH^1),
\end{equation*}
which completes the proof of this lemma.
\end{proof}
In the remaining part of this paper, we will choose $\bar p = 8$ in Lemma~\ref{lem: conversub}.

Secondly, we find the limits of sequences $\bigl\{\Pi_n(\vecu_n\times\Delta\vecu_n)\bigr\}_n$ and 
 $\bigl\{\Pi_n((1+ |\vecu_n|^2)\vecu_n)\bigr\}_n$ and their relationship with $\vecu$ 
 in the following lemmas. 

Since the Banach spaces $\mL^2(0,T;\mL^{3/2})$ and 
$\mL^2(0,T;X^{-\beta})$ are all reflexive, 
from Lemmas~\ref{lem: bound_un1}--\ref{lem: bound_un2} and 
by the Banach-Alaoglu Theorem there exist subsequences of $\{\vecu_n\times\Delta\vecu_n\}$ 
and of $\{\Pi_n\bigl(\vecu_n\times\Delta\vecu_n\bigr)\}$ (still denoted by 
$\{\vecu_n\times\Delta\vecu_n\}$, $\{\Pi_n\bigl(\vecu_n\times\Delta\vecu_n\bigr)\}$, respectively); and
$Z\in\mL^2(0,T;\mL^{3/2})$, $\bar Z\in \mL^2(0,T;X^{-\beta})$ such that
\begin{align}
\vecu_n\times\Delta\vecu_n
&\goto \vecZ \text{ weakly in } \mL^2(0,T;\mL^{3/2})\label{eq: converZ}\\
\Pi_n\bigl(\vecu_n\times\Delta\vecu_n\bigr)
&\goto \bar \vecZ \text{ weakly in } \mL^2(0,T;X^{-\beta})\label{eq: converZbar}.
\end{align}
\begin{lemma}\label{lem: conver1}
If $\vecZ$ and $\bar \vecZ$ defined as above, then $\vecZ=\bar \vecZ$ in $\mL^2(0,T;X^{-\beta})$.
\end{lemma}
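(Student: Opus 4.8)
The plan is to show that the two weak limits $\vecZ$ and $\bar\vecZ$ coincide by testing both against a fixed basis element $\vece_j$ and identifying them through the action of the projection operators $\Pi_n$. The key observation is that $\Pi_n$ is self-adjoint on $\mL^2$ and that for each fixed $j$, the function $\vece_j$ lies in $S_n$ for all $n\geq j$, so that $\Pi_n\vece_j = \vece_j$ eventually. First I would fix $j\in\N$ and a scalar test function $\eta\in C_0^\infty(0,T)$, and write the pairing
\begin{equation*}
\int_0^T \eta(t)\,{}_{X^{-\beta}}\!\iprod{\Pi_n\bigl(\vecu_n\times\Delta\vecu_n\bigr)(t),\vece_j}_{X^{\beta}}\dt,
\end{equation*}
noting that $\vece_j\in X^{\beta}$ for every $\beta$ since the $\vece_i$ are smooth eigenvectors. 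Using self-adjointness of $\Pi_n$ (which agrees with the $X^{\beta}$--$X^{-\beta}$ duality because it reduces to the $\mL^2$ inner product on the spanning eigenvectors), this pairing equals
\begin{equation*}
\int_0^T \eta(t)\,\iprod{\bigl(\vecu_n\times\Delta\vecu_n\bigr)(t),\Pi_n\vece_j}_{\mL^2}\dt
=
\int_0^T \eta(t)\,\iprod{\bigl(\vecu_n\times\Delta\vecu_n\bigr)(t),\vece_j}_{\mL^2}\dt
\end{equation*}
for all $n\geq j$, since $\Pi_n\vece_j=\vece_j$.

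Next I would pass to the limit on both sides. On the left, the weak convergence~\eqref{eq: converZbar} in $\mL^2(0,T;X^{-\beta})$ gives convergence to $\int_0^T\eta(t)\,{}_{X^{-\beta}}\!\iprod{\bar\vecZ(t),\vece_j}_{X^{\beta}}\dt$. On the right, the weak convergence~\eqref{eq: converZ} in $\mL^2(0,T;\mL^{3/2})$, paired against $\eta(t)\vece_j\in\mL^2(0,T;\mL^3)$ (using $\vece_j\in\mL^\infty\subset\mL^3$ on the bounded domain $D$, so that $\mL^3$ is the dual of $\mL^{3/2}$), yields convergence to $\int_0^T\eta(t)\,\iprod{\vecZ(t),\vece_j}_{\mL^2}\dt$. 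Since the two sides are equal for every $n\geq j$, the limits agree, giving
\begin{equation*}
\int_0^T \eta(t)\,{}_{X^{-\beta}}\!\iprod{\bar\vecZ(t),\vece_j}_{X^{\beta}}\dt
=
\int_0^T \eta(t)\,\iprod{\vecZ(t),\vece_j}_{\mL^2}\dt.
\end{equation*}

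Finally, because the duality pairing ${}_{X^{-\beta}}\!\iprod{\cdot,\vece_j}_{X^{\beta}}$ restricted to $\mL^2$-valued arguments coincides with the $\mL^2$ inner product $\iprod{\cdot,\vece_j}_{\mL^2}$, this identity says that $\bar\vecZ$ and $\vecZ$ have identical pairings against $\eta(t)\vece_j$ for all $j\in\N$ and all $\eta\in C_0^\infty(0,T)$. Since $\{\vece_j\}$ is a complete orthonormal system in $\mL^2$ and is total in $X^{\beta}$ (indeed $X^\beta$ is the closure of its span under the graph norm), the collection $\{\eta\,\vece_j\}$ is dense in $\mL^2(0,T;X^{\beta})$, the predual in the relevant duality. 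Hence $\bar\vecZ=\vecZ$ as elements of $\mL^2(0,T;X^{-\beta})$, using the embedding $\mL^{3/2}\hookrightarrow X^{-\beta}$ from~\eqref{eq: embed1} to regard $\vecZ$ as $X^{-\beta}$-valued. The step I expect to be the main obstacle is the careful bookkeeping of the duality pairings: one must verify that the $X^{-\beta}$--$X^{\beta}$ pairing, the $\mL^2$ inner product, and the $\mL^{3/2}$--$\mL^3$ pairing all agree when the second argument is a smooth eigenvector $\vece_j$, so that the two weak convergences can legitimately be matched on a common test function.
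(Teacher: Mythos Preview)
Your proof is correct and follows essentially the same approach as the paper's: both arguments exploit that $\Pi_n$ is self-adjoint and eventually fixes any element of $S_m$ (for $n\geq m$), then invoke density of $\bigcup_m S_m$ in $X^{\beta}$ to conclude. The only cosmetic difference is that the paper tests against a general $\vecphi_m\in L^2(0,T;X^{\beta}_m)$, whereas you test against tensor products $\eta(t)\vece_j$; your identification of the obstacle---reconciling the $X^{-\beta}$--$X^{\beta}$, $\mL^2$, and $\mL^{3/2}$--$\mL^3$ pairings on the common smooth test vector---is exactly what the paper's chain of equalities is doing.
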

\begin{proof}
From~\eqref{eq: embed1},  
we infer that $\vecZ\in \mL^2(0,T;X^{-\beta})$.
For every $n\in\N$, let us denote $X^{\beta}_n:=\{\Pi_n\vecx:\vecx\in X^{\beta}\}=S_n$ with the norm inherited 
from $X^{\beta}$. Then from Lemma~\ref{lem: Ap5}, 
$\cup_{n=1}^{\infty}X^{\beta}_n$ is dense $X^{\beta}$ and thus 
$\cup_{n=1}^{\infty}\mL^2(0,T;X^{\beta}_n)$ is dense $\mL^2(0,T;X^{\beta})$. Hence, 
it is sufficient to prove that for any $\vecphi_m\in \mL^2(0,T;X^{\beta}_m)$,
\[
_{\mL^2(0,T;X^{-\beta})}\!\iprod{\bar \vecZ,\vecphi_m}_{\mL^2(0,T;X^{\beta})} 
=
_{\mL^2(0,T;X^{-\beta})}\!\iprod{\vecZ,\vecphi_m}_{\mL^2(0,T;X^{\beta})} .
\]
For this aim let us fix $m\in\N$ and $\vecphi_m\in \mL^2(0,T;X^{\beta}_m)$. 
Since $X^{\beta}_m\subset X^{\beta}_n$ for any $n\geq m$, we have
\begin{align*}
_{\mL^2(0,T;X^{-\beta})}\!\iprod{\Pi_n\bigl(\vecu_n\times\Delta\vecu_n\bigr),\vecphi_m}_{\mL^2(0,T;X^{\beta})} 
&=
\int_0^T\,
_{X^{-\beta}}\!\iprod{\Pi_n\bigl(\vecu_n(t)\times\Delta\vecu_n(t)\bigr),\vecphi_m}_{X^{\beta}} \dt\\
&=                                               
\int_0^T\,
\iprod{\Pi_n\bigl(\vecu_n(t)\times\Delta\vecu_n(t)\bigr),\vecphi_m}_{\mL^2}\dt\\
&=
\int_0^T\,
\iprod{\bigl(\vecu_n(t)\times\Delta\vecu_n(t)\bigr),\Pi_n\vecphi_m}_{\mL^2}\dt\\
&=
\int_0^T\,
\iprod{\bigl(\vecu_n(t)\times\Delta\vecu_n(t)\bigr),\vecphi_m}_{\mL^2}\dt\\
&=
_{\mL^2(0,T;X^{-\beta})}\!\iprod{\bigl(\vecu_n\times\Delta\vecu_n\bigr),\vecphi_m}_{\mL^2(0,T;X^{\beta})}.
\end{align*}
Hence the result follows by taking the limit as $n$ tends to infinity of the above equation and 
using~\eqref{eq: converZ}--\eqref{eq: converZbar}.
\end{proof}
\begin{lemma}\label{lem: conver2}
For any $\vecphi\in \mW^{1,4}(D)\cap X^{\beta}$, there holds
\begin{align}
&\lim_{n\goto\infty}
\int_0^T\,
_{X^{-\beta}}\!\iprod{\Pi_n\bigl(\vecu_n(t)\times\Delta\vecu_n(t)\bigr),\vecphi}_{X^{\beta}} \dt
=
-
\int_0^T
\iprod{\vecu(t)\times\nabla\vecu(t),\nabla\vecphi}_{\mL^2}\label{eq: conver3}\\
\text{and }\quad
&\lim_{n\goto\infty}
\int_0^T
\iprod{\Pi_n\bigl((1+\mu|\vecu_n|^2(t))\vecu_n(t)\bigr),\vecphi}_{\mL^2}\dt
=
\int_0^T
\iprod{(1+\mu|\vecu|^2(t))\vecu(t),\vecphi}_{\mL^2}\dt\label{eq: conver4}
\end{align}
\end{lemma}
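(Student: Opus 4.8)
The plan is to treat the two identities separately, in each case using the self-adjointness of $\Pi_n$ on $\mL^2$ to move the projection off the nonlinearity, splitting the resulting pairing into a main term and a projection-error term, and then passing to the limit with the convergences supplied by Lemma~\ref{lem: conversub}. The cubic identity~\eqref{eq: conver4} is the more routine one and I would dispatch it first; the quadratic gradient identity~\eqref{eq: conver3} carries the real difficulty, and I would reduce it, via Lemma~\ref{lem: conver1}, to a weak limit of the \emph{unprojected} sequence so as to avoid estimating $\nabla\Pi_n\vecphi$.

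For~\eqref{eq: conver4}, since $\Pi_n$ is self-adjoint on $\mL^2$ and $\Pi_n\vecphi\to\vecphi$ in $\mL^2$ (as $\vecphi\in X^\beta\subset\mL^2$ and $\{\vece_i\}$ is an orthonormal basis), I would write
\[
\iprod{\Pi_n\bigl((1+\mu|\vecu_n|^2)\vecu_n\bigr),\vecphi}_{\mL^2}
=\iprod{(1+\mu|\vecu_n|^2)\vecu_n,\vecphi}_{\mL^2}
+\iprod{(1+\mu|\vecu_n|^2)\vecu_n,\Pi_n\vecphi-\vecphi}_{\mL^2}.
\]
The error term is bounded by $C\,\|\Pi_n\vecphi-\vecphi\|_{\mL^2}\to0$ using the second estimate of Lemma~\ref{lem: bound_un1}. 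In the main term the linear part converges because $\vecu_n\to\vecu$ in $L^2(0,T;\mL^2)$, while for the cubic part the choice $\bar p=8$ is exactly what is needed: from the elementary bound $\||\vecu_n|^2\vecu_n-|\vecu|^2\vecu\|_{\mL^{4/3}}\le C(\|\vecu_n\|_{\mL^4}^2+\|\vecu\|_{\mL^4}^2)\|\vecu_n-\vecu\|_{\mL^4}$ together with $\vecu_n\to\vecu$ in $L^8(0,T;\mL^4)$, the cubic nonlinearity converges in $L^1(0,T;\mL^{4/3})$, which pairs against $\vecphi\in\mL^4$.

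For~\eqref{eq: conver3} I would first use the weak convergence~\eqref{eq: converZbar} and the identification $\bar\vecZ=\vecZ$ from Lemma~\ref{lem: conver1}: testing against the time-independent $\vecphi\in X^\beta$ gives
\[
\lim_{n\goto\infty}\int_0^T{}_{X^{-\beta}}\!\iprod{\Pi_n(\vecu_n\times\Delta\vecu_n),\vecphi}_{X^\beta}\dt
=\int_0^T\iprod{\vecZ,\vecphi}_{\mL^2}\dt,
\]
where the duality collapses to the $\mL^2$ pairing since $\vecZ\in\mL^{3/2}$ and $\vecphi\in\mL^3$. It then remains to identify $\vecZ$. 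Because each $\vecu_n\in S_n$ vanishes on $\partial D$, the identity $\vecu_n\times\Delta\vecu_n=\sum_j\partial_j(\vecu_n\times\partial_j\vecu_n)$ integrates by parts with no boundary term, yielding
\[
\int_0^T\iprod{\vecu_n\times\Delta\vecu_n,\vecphi}_{\mL^2}\dt
=-\int_0^T\iprod{\vecu_n\times\nabla\vecu_n,\nabla\vecphi}_{\mL^2}\dt .
\]
By~\eqref{eq: converZ} (with $\vecphi\in\mL^3=(\mL^{3/2})^*$) the left-hand side also tends to $\int_0^T\iprod{\vecZ,\vecphi}_{\mL^2}\dt$, so the two displays together reduce the lemma to passing to the limit on the right-hand side.

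The main obstacle is precisely this last passage, since $\nabla\vecu_n$ converges only weakly. The device I would use is the scalar triple product identity to transfer one gradient onto the test function,
\[
\iprod{\vecu_n\times\nabla\vecu_n,\nabla\vecphi}_{\mL^2}
=\iprod{\nabla\vecu_n,\nabla\vecphi\times\vecu_n}_{\mL^2},
\]
which isolates the single weakly convergent factor $\nabla\vecu_n\rightharpoonup\nabla\vecu$ in $L^2(0,T;\mL^2)$ (from~\eqref{eq: converu2}) against $\nabla\vecphi\times\vecu_n$. The latter converges \emph{strongly} in $L^2(0,T;\mL^2)$: since $\vecphi\in\mW^{1,4}$ we have $\nabla\vecphi\in\mL^4$, and $\|\nabla\vecphi\times(\vecu_n-\vecu)\|_{L^2(0,T;\mL^2)}\le\|\nabla\vecphi\|_{\mL^4}\|\vecu_n-\vecu\|_{L^2(0,T;\mL^4)}\to0$ by Lemma~\ref{lem: conversub}. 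A weak-times-strong argument then gives $\int_0^T\iprod{\nabla\vecu_n,\nabla\vecphi\times\vecu_n}_{\mL^2}\dt\to\int_0^T\iprod{\nabla\vecu,\nabla\vecphi\times\vecu}_{\mL^2}\dt$, and undoing the triple product identifies the right-hand side limit as $-\int_0^T\iprod{\vecu\times\nabla\vecu,\nabla\vecphi}_{\mL^2}\dt$. Equating this with $\int_0^T\iprod{\vecZ,\vecphi}_{\mL^2}\dt$ establishes~\eqref{eq: conver3}; note that both hypotheses on $\vecphi$ are used, $X^\beta$ for the duality step and $\mW^{1,4}$ for the strong convergence of $\nabla\vecphi\times\vecu_n$.
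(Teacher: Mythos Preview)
Your argument is correct and follows essentially the same route as the paper: for~\eqref{eq: conver3} you reduce, via Lemma~\ref{lem: conver1} and the integration-by-parts identity $\iprod{\vecu_n\times\Delta\vecu_n,\vecphi}_{\mL^2}=-\iprod{\vecu_n\times\nabla\vecu_n,\nabla\vecphi}_{\mL^2}$, to the limit~\eqref{eq: conver5} and then use the triple-product rearrangement together with strong $L^4(0,T;\mL^4)$ convergence of $\vecu_n$ and weak $L^2(0,T;\mH^1)$ convergence of $\nabla\vecu_n$, exactly as the paper does; your weak--strong formulation is just the paper's add--subtract decomposition written in a single step. For~\eqref{eq: conver4} both proofs use self-adjointness of $\Pi_n$, the projection error $\|\Pi_n\vecphi-\vecphi\|_{\mL^2}\to0$, and strong $L^{\bar p}(0,T;\mL^4)$ convergence; your $\mL^{4/3}$--$\mL^4$ pairing for the cubic is a cosmetic variant of the paper's three-term split.
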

\begin{proof}

\underline{Proof of~\eqref{eq: conver3}:}
From~\eqref{eq: converZ}--\eqref{eq: converZbar}, Lemma~\ref{lem: conver1}, 
and 
\[
\iprod{\vecu_n(t)\times\Delta\vecu_n(t),\vecphi}_{\mL^2}=-\iprod{\vecu_n(t)\times\nabla\vecu_n(t),\nabla\vecphi}_{\mL^2}, 
\]
it is sufficient to prove that 
\begin{equation}\label{eq: conver5}
\lim_{n\goto\infty} 
\int_0^T
\iprod{\vecu_n(t)\times\nabla\vecu_n(t),\nabla\vecphi}_{\mL^2}\dt
= 
\int_0^T
\iprod{\vecu(t)\times\nabla\vecu(t),\nabla\vecphi}_{\mL^2}\dt.
\end{equation}
By using the triangle and H\"older inequalities together with Lemma~\ref{lem: appSo_sta}, we see that
\begin{align*}
\bigg|
\int_0^T
&\iprod{\vecu_n(t)\times\nabla\vecu_n(t),\nabla\vecphi}_{\mL^2}\dt
-
\int_0^T
\iprod{\vecu(t)\times\nabla\vecu(t),\nabla\vecphi}_{\mL^2}\dt
\bigg|\\
&\leq
\bigg|\int_0^T\iprod{(\vecu_n(t)-\vecu(t))\times\nabla\vecu_n(t),\nabla\vecphi}_{\mL^2}\dt\bigg|\\
&\quad+
\bigg|\int_0^T\iprod{\vecu(t)\times(\nabla\vecu_n(t)-\nabla\vecu(t)),\nabla\vecphi}_{\mL^2}\dt\bigg|\\
&\leq
\|\vecu_n-\vecu\|_{L^4(0,T;\mL^4)} \|\nabla\vecu_n\|_{L^2(0,T;\mL^2)}\|\nabla\vecphi\|_{L^4(0,T;\mL^4)}\\
&\quad
+
\bigg|\int_0^T\iprod{\nabla\vecu_n(t)-\nabla\vecu(t),\nabla\vecphi\times\vecu(t)}_{\mL^2}\dt\bigg|\\
&\leq
C \|\vecu_n-\vecu\|_{L^4(0,T;\mL^4)}
+
\bigg|\int_0^T\iprod{\nabla\vecu_n(t)-\nabla\vecu(t),\nabla\vecphi\times\vecu(t)}_{\mL^2}\dt\bigg|.
\end{align*}
Hence,~\eqref{eq: conver3}  follows by passing to the limit as $n$ tends to infinity 
of the above inequality and using~\eqref{eq: converu}--\eqref{eq: converu2}, 
 noting that $\nabla\vecphi\times\vecu\in L^2(0,T;\mL^2)$ since $\vecu\in L^4(0,T;\mL^4)$.

\underline{Proof of~\eqref{eq: conver4}:} 
Since $\Pi_n$ is a self-adjoint operator on $\mL^2$, we have
\[
\iprod{\Pi_n\bigl((1+\mu|\vecu_n|^2(t))\vecu_n(t)\bigr),\vecphi}_{\mL^2}
=
\iprod{\vecu_n,\vecphi}_{\mL^2}
+
\mu\iprod{|\vecu_n|^2(t)\vecu_n(t),\Pi_n\vecphi}_{\mL^2},
\]
so from~\eqref{eq: converu2}, it is sufficient to prove that 
\[
\lim_{n\goto\infty} 
\int_0^T
\iprod{|\vecu_n|^2(t)\vecu_n(t),\Pi_n\vecphi}_{\mL^2}\dt
=
\int_0^T
\iprod{|\vecu|^2(t)\vecu(t),\vecphi}_{\mL^2}\dt.
\]
By using the triangle and H\"older inequalities,~\eqref{eq: un3} and Lemma~\ref{lem: appSo_sta}, we see that
\begin{align*}
\bigg|
\int_0^T
&\iprod{|\vecu_n|^2(t)\vecu_n(t),\Pi_n\vecphi}_{\mL^2}\dt
-
\int_0^T
\iprod{|\vecu|^2(t)\vecu(t),\vecphi}_{\mL^2}\dt
\bigg|\\
&\leq
\bigg|\int_0^T\iprod{|\vecu_n|^2(t)\vecu_n(t),\Pi_n\vecphi-\vecphi}_{\mL^2}\dt\bigg|
+
\bigg|\int_0^T
\iprod{|\vecu_n|^2(t)(\vecu_n(t)-\vecu(t)),\vecphi}_{\mL^2}\dt\bigg|\\
&\quad+
\bigg|\int_0^T
\iprod{(|\vecu_n|^2(t)-|\vecu|^2(t))\vecu(t),\vecphi}_{\mL^2}\dt\bigg|\\
&\leq
\|\Pi_n\vecphi-\vecphi\|_{\mL^2}\int_0^T\|\vecu_n^3(t)\|_{\mL^2}\dt\\
&\quad+
\||\vecu_n|^2\|_{L^2(0,T;\mL^2)}\|\vecu_n-\vecu\|_{L^4(0,T;\mL^4)}\|\vecphi\|_{L^4(0,T;\mL^4)}\\
&\quad
+
\|\vecu_n-\vecu\|_{L^4(0,T;\mL^4)}
\|\vecu_n+\vecu\|_{L^4(0,T;\mL^4)}
\|\vecu\|_{L^4(0,T;\mL^4)}
\|\vecphi\|_{L^4(0,T;\mL^4)}\\
&\leq
C\|\Pi_n\vecphi-\vecphi\|_{\mL^2}
+
C\|\vecu_n-\vecu\|_{L^4(0,T;\mL^4)}.
\end{align*}
Hence,~\eqref{eq: conver4} follows by passing to the limit as $n$ tends to infinity of the above inequality and 
using~\eqref{eq: converu}.
\end{proof}
We wish to use the notations $\Delta\vecu$ and 
$\vecu\times\Delta\vecu$ in the equation satisfied by $\vecu$. 
These notations are defined as follow.

From Lemma~\ref{lem: appSo_sta}, we have $\Delta\vecu_n$ is 
uniformly bounded in $L^2(0,T;\mL^2)$. Thus, there exist
a subsequence of $\{\Delta\vecu_n\}$ (still denoted by $\{\Delta\vecu_n\}$) 
and $\vecY\in L^2(0,T;\mL^2)$ such that 
\[
\Delta\vecu_n\goto\vecY\text { weakly in } L^2(0,T;\mL^2).
\]
Together with~\eqref{eq: converu2} we obtain
\[
\int_0^T
\inprod{\vecY(t)}{\vecphi}_{\mL^2}\dt
=
-
\int_0^T
\inprod{\nabla\vecu(t)}{\nabla\vecphi}_{\mL^2}\dt,
\]
for 
$\vecphi\in\mW^{1,4}(D)\cap X^{\beta}$. 
\begin{notation}\label{no: nota1}
By denoting $\Delta\vecu:=\vecY$, we have 
$\Delta\vecu\in L^2(0,T;\mL^2)$.
\end{notation}

From~\eqref{eq: converZ} and \eqref{eq: conver5}, for 
$\vecphi\in\mW^{1,4}(D)\cap X^{\beta}$ we have
\[
\int_0^T
\inprod{\vecZ(t)}{\vecphi}_{\mL^2}\dt
=
-
\int_0^T
\iprod{\vecu(t)\times\nabla\vecu(t),\nabla\vecphi}_{\mL^2}\dt.
\]
\begin{notation}\label{no: nota2}
By denoting $\vecu\times\Delta\vecu:=\vecZ$, we have 
$\vecu\times\Delta\vecu\in L^2(0,T;\mL^{3/2})$.
\end{notation}


We now ready to prove the main theorem.
\begin{proof}{Proof of theorem~\ref{theo: main}}

For any test function $\vecphi\in\mW^{1,4}(D)\cap X^{\beta}$, from~\eqref{eq: GaLLB2} 
and integrating by parts, we have
\begin{align*}
\iprod{\vecu_n(t),\vecphi}_{\mL^2}
&=\iprod{\vecu_n(0),\vecphi}_{\mL^2}
-\kappa_1\int_0^t\iprod{\nabla\vecu_n(s),\nabla\vecphi}_{\mL^2}\ds
+\gamma\int_0^t\iprod{\Pi_n\bigl(\vecu_n(s)\times\Delta\vecu_n(s)\bigr),\vecphi}_{\mL^2}\ds\\
&\quad
-\kappa_2\int_0^t\iprod{\Pi_n\bigl((1+\mu|\vecu_n|^2(s))\vecu_n(s)\bigr),\vecphi}_{\mL^2}\ds.
\end{align*}
By passing to the limit as $n$ tends to infinity of the above equation
and using~\eqref{eq: converu}--\eqref{eq: converu2} and Lemma~\ref{lem: conver2}, 
we obtain that $\vecu$ satisfies~\eqref{eq: weakLLB}. 

Furthermore, using Notations~\ref{no: nota1}--\ref{no: nota2}, we infer that 
$\vecu$ satisfies the following equation in $X^{-\beta}$ with $\beta>\frac{4+d}{4m}$,
\begin{align}\label{eq: LLB3}
\vecu(t) = \vecu_0
+
\kappa_1
\int_0^t
\Delta\vecu(s)\ds
+
\gamma
\int_0^t
\vecu(s)\times\Delta\vecu(s)\ds
-
\kappa_2
\int_0^t
(1+|\vecu|^2(s))\vecu(s)\ds.
\end{align}

\underline{Proof of (a):} 
It is enough to prove that the terms in equation~\eqref{eq: LLB3} are in the 
space $\mL^{3/2}$. Since we wish to use the following arguments in 
the proof of (b), we will use $\int_{\tau}^t$ for $\tau\in[0,t)$ instead of just $\int_0^t$.
By using the Minkowski inequality and the continuous embedding $\mL^2\hookrightarrow\mL^{3/2}$, 
we have
\begin{align}\label{eq: a1}
\bigg\|\int_{\tau}^t
\Delta\vecu(s)\ds\bigg\|_{\mL^{3/2}}
\leq
\int_{\tau}^t
\|
\Delta\vecu(s)\|_{\mL^{3/2}}\ds
\leq
C
\int_{\tau}^t
\|
\Delta\vecu(s)\|_{\mL^2}\ds
\leq 
C(t-\tau)^{\tfrac12},
\end{align}
and
\begin{align}\label{eq: a2}
\|\int_{\tau}^t
\vecu(s)\times\Delta\vecu(s)\ds\|_{\mL^{3/2}}
&\leq
\int_{\tau}^t
\|
\vecu(s)\times\Delta\vecu(s)\|_{\mL^{3/2}}\ds\nn\\
&\leq
(t-\tau)^{\tfrac12}
\bigl(\int_{\tau}^t
\|\vecu(s)\times\Delta\vecu(s)\|^2_{\mL^{3/2}}\ds\bigr)^{\tfrac12}
\leq
C(t-\tau)^{\tfrac12}.
\end{align}
For the last term in~\eqref{eq: LLB3}, it is sufficient to 
prove that $\|\int_0^t
|\vecu|^2(s)\vecu(s)\ds\|_{\mL^{3/2}}<\infty$. Indeed, 
by using the H\"older and Minkowski inequalities we deduce
\begin{align}\label{eq: a3}
\bigg\|\int_{\tau}^t
|\vecu|^2(s)\vecu(s)\ds\bigg\|_{\mL^{3/2}}^{3/2}
&=
\int_D
\bigg|\int_{\tau}^t
|\vecu|^2(s)\vecu(s)\ds\bigg|^{3/2}\dvx\nn\\
&\leq
\int_D
\bigl(\int_{\tau}^t
|\vecu|^4(s)\ds\bigr)^{\tfrac34}\bigl(\int_{\tau}^t
|\vecu|^2(s)\ds\bigr)^{\tfrac34}\dvx\nn\\
&\leq
\bigl(\int_D
\int_{\tau}^t
|\vecu|^4(s)\ds\dvx\bigr)^{\tfrac34}
\bigl(\int_D\bigl(\int_{\tau}^t
|\vecu|^2(s)\ds\bigr)^3\dvx\bigr)^{\tfrac14}\nn\\
&\leq
(t-\tau)^{\tfrac38}
\|\vecu\|^3_{L^8(0,T;\mL^4)}
\|\vecu\|^{3/2}_{L^2(0,T;\mL^6)}
\leq 
C(t-\tau)^{\tfrac38},
\end{align}
where the last inequality follows because the fact 
that 
\[
\vecu\in L^8(0,T;\mL^4)\cap L^2(0,T;\mL^6),
\]
which is 
a consequence of 
Lemma~\ref{lem: conversub} and the embedding 
\[
L^2(0,T;\mH^1)\hookrightarrow L^2(0,T;\mL^6).
\]
By taking $\tau=0$ in~\eqref{eq: a1}--\eqref{eq: a3}, we infer that 
$\vecu$ satisfies~\eqref{eq: LLB3} in $\mL^{3/2}$.

\underline{Proof of (b):} 
From~\eqref{eq: a1}--\eqref{eq: a3}, we obtain
\[
\sup_{0\leq\tau<t\leq T} 
\frac{\|\vecu(t)-\vecu(\tau)\|_{\mL^{3/2}}}{|t-\tau|^{1/4}}
<\infty;
\]
it follows that $\vecu\in C^{\bar \alpha}([0,T];\mL^{3/2})$ 
for evey $\bar \alpha\in(0,\tfrac14]$.

\underline{Proof of (c):} 
Finally, property (c) follows from 
applying weak lower semicontinuity of norms in the first inequality of Lemma~\ref{lem: appSo_sta}, 
which complete the proof of our main theorem.
\end{proof}

\section{Appendix}
\begin{lemma}\label{lem: Ap5}
Let $X^{\beta}_n:=\{\Pi_n\vecx : \vecx\in X^{\beta}\}$ with the 
norm inherited from $X^{\beta}$. Then 
\[
\lim_{n\goto\infty} \|\Pi_n\vecx-\vecx\|_{X^{\beta}} 
= 0
\quad\text{for every $\vecx\in X^{\beta}$.}
\]
\end{lemma}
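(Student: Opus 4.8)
The plan is to expand everything in the eigenbasis $\{\vece_i\}_{i=1}^{\infty}$ and to recognise $\|\Pi_n\vecx-\vecx\|_{X^{\beta}}^2$ as the tail of a convergent series. First I would fix $\vecx\in X^{\beta}$. Since $\{\vece_i\}_{i=1}^{\infty}$ is an orthonormal basis of $\mL^2$ and $S_n=\spann\{\vece_1,\dots,\vece_n\}$, the $\mL^2$-orthogonal projection $\Pi_n$ is simply the truncation of the eigenexpansion, so that
\[
\vecx=\sum_{i=1}^{\infty}\inpro{\vecx}{\vece_i}_{\mL^2}\vece_i,
\qquad
\Pi_n\vecx=\sum_{i=1}^{n}\inpro{\vecx}{\vece_i}_{\mL^2}\vece_i,
\qquad
\vecx-\Pi_n\vecx=\sum_{i=n+1}^{\infty}\inpro{\vecx}{\vece_i}_{\mL^2}\vece_i .
\]
The operator $A_1^{\beta}$ of Definition~\ref{def: fracspace} acts diagonally on the basis by $A_1^{\beta}\vece_i=(1+\lambda_i)^{\beta}\vece_i$, and in particular it commutes with the truncation above. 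Using this together with the orthonormality of $\{\vece_i\}$ in $\mL^2$, I would compute
\[
\|\vecx-\Pi_n\vecx\|_{X^{\beta}}^2
=\bigl\|A_1^{\beta}(\vecx-\Pi_n\vecx)\bigr\|_{\mL^2}^2
=\sum_{i=n+1}^{\infty}(1+\lambda_i)^{2\beta}\bigl|\inpro{\vecx}{\vece_i}_{\mL^2}\bigr|^2 .
\]

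The right-hand side is exactly the tail of the series $\sum_{i=1}^{\infty}(1+\lambda_i)^{2\beta}\bigl|\inpro{\vecx}{\vece_i}_{\mL^2}\bigr|^2=\|\vecx\|_{X^{\beta}}^2$, which converges precisely because $\vecx\in X^{\beta}$. Since the tail of a convergent series of nonnegative terms tends to zero, letting $n\to\infty$ yields $\|\Pi_n\vecx-\vecx\|_{X^{\beta}}\to 0$, which is the assertion.

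Equivalently, one may phrase this through the Hilbert-space structure of $X^{\beta}$: the vectors $(1+\lambda_i)^{-\beta}\vece_i$ form an orthonormal basis of $X^{\beta}$, the projection $\Pi_n$ coincides with the orthogonal projection of $X^{\beta}$ onto the span of its first $n$ elements, and convergence of the partial sums of an orthonormal expansion is the standard fact. I do not anticipate a genuine obstacle here; the only points requiring care are the two elementary bookkeeping facts already used above, namely that $\Pi_n$ (defined as the $\mL^2$-orthogonal projection) equals truncation of the eigenexpansion, and that $A_1^{\beta}$ commutes with this truncation because both are diagonal in $\{\vece_i\}$. The single conceptual step is recognising the $X^{\beta}$-norm of the remainder as a series tail, after which the result is immediate.
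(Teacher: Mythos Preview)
Your proposal is correct and follows essentially the same route as the paper: expand in the eigenbasis, identify $\vecx-\Pi_n\vecx$ with the tail of the expansion, and use that $\|\vecx\|_{X^{\beta}}^2<\infty$ forces the tail $\sum_{i>n}(1+\lambda_i)^{2\beta}\inpro{\vecx}{\vece_i}_{\mL^2}^2$ to vanish as $n\to\infty$. If anything, your version is slightly more careful with the exponent (writing $(1+\lambda_i)^{2\beta}$ for the squared norm), and your remark that $\Pi_n$ coincides with the $X^{\beta}$-orthogonal projection onto $S_n$ is a nice alternative viewpoint, but there is no substantive difference from the paper's argument.
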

\begin{proof}
For $\vecx\in X^{\beta}$, we have 
$\Pi_n\vecx = \sum_{i=1}^n\inpro{\vecx}{\vece_i}_{\mL^2}\vece_i$, thus
$
\vecx-\Pi_n\vecx = \sum_{i=n+1}^{\infty}\inpro{\vecx}{\vece_i}_{\mL^2}\vece_i.
$
By using orthonormal property of $\{\vece_i\}$, we obtain
\[
\lim_{n\goto\infty}\|\Pi_n\vecx-\vecx\|_{X^{\beta}} 
=
\lim_{n\goto\infty}\sum_{i=n+1}^{\infty}(1+\lambda_i)^\beta\inpro{\vecx}{\vece_i}^2_{\mL^2}
=0,
\]
as $\|\vecx\|_{X^{\beta}}:= \sum_{i=1}^{\infty}(1+\lambda_i)^\beta\inpro{\vecx}{\vece_i}^2_{\mL^2}<\infty$.
\end{proof}


For the reader's convenience we will recall some embedding results that 
are crucial for the proof of convergence of the approximating sequence $\{\vecu_n\}$.
\begin{lemma}\label{lem: Ap1}\cite[Theorem 1.6.1]{Henry1981}
Suppose $\Omega\subset\R^d$ is an open set having the $C^m$ extension property, 
$1\leq p<\infty$ and $A$ is a sectorial operator in $X=\mL^p(\Omega)$ 
with $D(A) = X^1\hookrightarrow \mW^{m,p}(\Omega)$ for some $m\geq 1$. Then 
for $0\leq\beta\leq 1$,
\begin{align*}
X^{\beta}\hookrightarrow \mW^{k,q}(\Omega)
&\quad\text{when}\quad
k-\frac{d}{q}<m\beta-\frac{d}{p},\quad q\geq p,\\
\text{and}\quad
X^{\beta}\hookrightarrow \C^{\alpha}(\Omega)
&\quad\text{when}\quad
0\leq \alpha < m\beta-\frac{d}{p}.
\end{align*}
\end{lemma}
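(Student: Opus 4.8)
The plan is to prove both embeddings by representing elements of $X^\beta$ through the analytic semigroup generated by $-A_1$ and estimating under a scalar integral, so that the exponent conditions in the statement emerge exactly as convergence conditions for that integral. Since $A_1=I+A$ is sectorial and invertible, its spectrum lies in a half-plane $\{\operatorname{Re} z\ge\delta\}$ with $\delta>0$, its negative fractional powers are bounded, and $X^\beta=\operatorname{Range}(A_1^{-\beta})$ with $\|u\|_{X^\beta}=\|A_1^{\beta}u\|_{\mL^p}$. Thus every $u\in X^\beta$ is $u=A_1^{-\beta}v$ with $v=A_1^{\beta}u$ and $\|v\|_{\mL^p}=\|u\|_{X^\beta}$, and for $\beta>0$ (the case $\beta=0$ being vacuous under the strict hypotheses) I would use the representation
\[
u=A_1^{-\beta}v=\frac{1}{\Gamma(\beta)}\int_0^\infty t^{\beta-1}e^{-A_1 t}v\,dt .
\]

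First I would establish an $\mL^p\to\mW^{k,q}$ smoothing estimate for the semigroup. From the standard sectorial bound $\|A_1 e^{-A_1 t}\|_{\cL(\mL^p)}\le C t^{-1}e^{-\delta t}$ (\cite[Theorem 1.4.3]{Henry1981}) together with the hypothesis $X^1\hookrightarrow\mW^{m,p}$, one gets $\|e^{-A_1 t}v\|_{\mW^{m,p}}\le C\|A_1 e^{-A_1 t}v\|_{\mL^p}\le C t^{-1}e^{-\delta t}\|v\|_{\mL^p}$, while the uniform bound $\|e^{-A_1 t}\|_{\cL(\mL^p)}\le Ce^{-\delta t}$ gives $\|e^{-A_1 t}v\|_{\mL^p}\le Ce^{-\delta t}\|v\|_{\mL^p}$. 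Because $\Omega$ has the $C^m$ extension property, the Gagliardo--Nirenberg inequality $\|w\|_{\mW^{k,q}}\le C\|w\|_{\mW^{m,p}}^{a}\|w\|_{\mL^p}^{1-a}$ holds on $\Omega$ (extend to $\R^d$, interpolate, restrict), with $a=(k-d/q+d/p)/m$; the restriction $q\ge p$ forces $a\ge k/m$, so the inequality is admissible. Applying it to $w=e^{-A_1 t}v$ yields the key bound
\[
\|e^{-A_1 t}v\|_{\mW^{k,q}}\le C\,t^{-a}\,e^{-\delta t}\,\|v\|_{\mL^p}.
\]

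Substituting this into the representation and taking $\mW^{k,q}$-norms inside the integral gives $\|u\|_{\mW^{k,q}}\le C\|v\|_{\mL^p}\int_0^\infty t^{\beta-a-1}e^{-\delta t}\,dt$. The exponential factor, a consequence of shifting $A$ by $I$, secures convergence at $t=\infty$, and the integral converges at $t=0$ precisely when $\beta-a>0$, that is $k-d/q<m\beta-d/p$, which is exactly the stated hypothesis. This produces $\|u\|_{\mW^{k,q}}\le C\|u\|_{X^\beta}$, proving the first embedding. The Hölder embedding follows by the identical argument with the Sobolev--Morrey inequality $\|w\|_{\C^{\alpha}}\le C\|w\|_{\mW^{m,p}}^{a}\|w\|_{\mL^p}^{1-a}$ replacing Gagliardo--Nirenberg; here $a=(\alpha+d/p)/m$, and the same convergence condition $\beta-a>0$ becomes $\alpha<m\beta-d/p$.

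The step I expect to be the main obstacle is obtaining the smoothing estimate into $\mW^{k,q}$ (and into $\C^{\alpha}$) with the \emph{sharp} power $t^{-a}$: this is where the abstract sectorial bound must be coupled with a concrete interpolation inequality on the domain, and where the $C^m$ extension property is genuinely used to transport Gagliardo--Nirenberg and Sobolev--Morrey from $\R^d$ to $\Omega$. A secondary point requiring care is the precise role of the strict inequality: beyond ensuring integrability of $t^{\beta-a-1}$ at the origin, strictness keeps $a$ strictly below $\beta\le 1$ and away from the excluded borderline configurations of the Gagliardo--Nirenberg inequality, so that no endpoint correction terms enter the estimate.
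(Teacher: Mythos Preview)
The paper does not prove this lemma; it is simply quoted from \cite[Theorem 1.6.1]{Henry1981} and placed in the Appendix as a reference result, with no argument supplied. So there is no ``paper's own proof'' to compare against.

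That said, your proposal is correct and is in fact essentially the proof given in Henry's book. The key ingredients---the integral representation $A_1^{-\beta}=\Gamma(\beta)^{-1}\int_0^\infty t^{\beta-1}e^{-A_1 t}\,dt$, the sectorial smoothing bound $\|A_1 e^{-A_1 t}\|_{\cL(\mL^p)}\le Ct^{-1}e^{-\delta t}$, and the Gagliardo--Nirenberg (resp.\ Sobolev--Morrey) interpolation transported to $\Omega$ via the $C^m$ extension property---are precisely what Henry uses, and your identification of the exponent $a$ and of the strict inequality as the integrability condition at $t=0$ is accurate. The only cosmetic point is that the case $\beta=1$ does not need the integral representation at all: there $X^1\hookrightarrow\mW^{m,p}$ by hypothesis and the classical Sobolev embeddings finish the job directly.
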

\begin{lemma}\label{lem: Ap2}\cite[Corollary 19]{Simon1990}
Suppose $s\geq r$, $p\leq q$ and $s-1/p\geq r-1/q$ 
($0<r\leq s<1$, $1\leq p\leq q\leq \infty$). Let $E$ be a 
Banach space and $I$ be an interval of $\R$. Then
\[
W^{s,p}(I;E)\hookrightarrow W^{r,q}(I;E).
\]
\end{lemma}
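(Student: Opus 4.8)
The plan is to prove this directly from the Gagliardo--Slobodeckij definition of the vector-valued fractional spaces, reducing everything to scalar inequalities and then composing two elementary embeddings. For $0<s<1$ recall that
\[
\|u\|_{W^{s,p}(I;E)}^p = \|u\|_{L^p(I;E)}^p + \int_I\int_I \frac{\|u(t)-u(\tau)\|_E^p}{|t-\tau|^{1+sp}}\,dt\,d\tau,
\]
with the usual modification when $p=\infty$. The first observation is that every quantity here depends on $u$ only through the nonnegative scalar functions $t\mapsto\|u(t)\|_E$ and $(t,\tau)\mapsto\|u(t)-u(\tau)\|_E$, and the triangle inequality in $E$ is the only structural fact the estimates use. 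Hence it suffices to prove the embedding for $E=\R$; the Banach-valued case follows verbatim, which is precisely what makes the result valid for arbitrary $E$.

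Next I would factor the desired inclusion through an intermediate smoothness index so as to separate the two independent effects. Writing the hypotheses as $s-r\geq 1/p-1/q\geq 0$, I set $\rho:=r+1/p-1/q$, which satisfies $r\leq\rho\leq s$ (the upper bound being exactly the Sobolev hypothesis) and $\rho-1/p=r-1/q$, and prove
\[
W^{s,p}(I;E)\hookrightarrow W^{\rho,p}(I;E)\hookrightarrow W^{r,q}(I;E).
\]
The first inclusion only lowers the smoothness index at fixed integrability; the second is the genuine Sobolev gain at the critical scaling $\rho-1/p=r-1/q$.

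For the first inclusion ($p=q$, $\rho\leq s$ on the bounded interval $I$) I would split the double integral defining the $\rho$-seminorm over the near-diagonal region $\{|t-\tau|<1\}$ and the off-diagonal region $\{|t-\tau|\geq 1\}$ (rescaling so that $|I|$ fixes the threshold). On the near-diagonal part $|t-\tau|^{-(1+\rho p)}\leq |t-\tau|^{-(1+sp)}$ because $\rho\leq s$, so it is dominated by the $s$-seminorm; on the off-diagonal part the kernel is bounded and the integral is controlled by $\|u\|_{L^p(I;E)}^p$. Summing yields $[u]_{W^{\rho,p}}\leq C\,\|u\|_{W^{s,p}}$, with $C$ depending on $|I|$. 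The core of the argument is the second inclusion. For a subinterval $B$ I would use the averaged deviation bound $\|u(t)-u_B\|_E\leq |B|^{-1}\int_B\|u(t)-u(\tau)\|_E\,d\tau$, where $u_B$ is the mean of $u$ over $B$, together with the scaling relation $[u]_{W^{\rho,p}(B)}\sim |B|^{1/p-\rho}$ to control local oscillations by the $\rho$-seminorm. Feeding the kernel $(t,\tau)\mapsto \|u(t)-u(\tau)\|_E\,|t-\tau|^{-(1+\rho p)/p}$ into a Hardy--Littlewood--Sobolev (Young convolution) estimate converts the $L^p$ seminorm bound into an $L^q$ bound on $u$ at the critical scaling; applying the same localized estimate to the differences $u(\cdot+h)-u(\cdot)$ then upgrades this to full control of the $W^{r,q}$-seminorm at the lower index $r$.

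The step I expect to be the main obstacle is the endpoint bookkeeping in this second inclusion: the borderline cases $\rho p=1$ and $q=\infty$ must be handled separately, since the Hardy--Littlewood--Sobolev estimate degenerates there and has to be replaced by a logarithmic or Morrey argument (the latter giving $u\in C(\bar I;E)$ and hence admitting $q=\infty$). One must also verify that all constants produced depend only on $s,r,p,q$ and $|I|$ and never on the geometry of $E$, so that the scalar reduction is genuinely lossless. Once these cases are dispatched, composing the two inclusions yields $W^{s,p}(I;E)\hookrightarrow W^{r,q}(I;E)$ under the stated hypotheses.
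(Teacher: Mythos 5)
The paper itself offers no proof of this lemma: it is quoted verbatim from Simon \cite[Corollary 19]{Simon1990}, whose own derivation proceeds by comparing $W^{s,p}$ with Nikolskii/Besov-type classes, so your argument has to stand entirely on its own. Its skeleton is sensible: factoring through $\rho:=r+1/p-1/q$, which satisfies $r\le\rho\le s$ and $\rho-1/p=r-1/q$, is a correct reduction, and your proof of the first inclusion $W^{s,p}\hookrightarrow W^{\rho,p}$ (near-diagonal/off-diagonal splitting of the Gagliardo integral) is complete and standard. One caveat already at the start: ``it suffices to prove the embedding for $E=\R$'' is not a genuine reduction, since knowing $t\mapsto\|u(t)\|_E\in W^{r,q}(I;\R)$ says nothing about $u\in W^{r,q}(I;E)$; what is true is that a proof manipulating only the scalar kernel $(t,\tau)\mapsto\|u(t)-u(\tau)\|_E$ via the triangle inequality transfers verbatim to Banach-valued $u$ — but this obliges you to avoid Fourier/Littlewood--Paley shortcuts, which is precisely why the remaining step cannot be waved away.

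The genuine gap is in the second inclusion $W^{\rho,p}\hookrightarrow W^{r,q}$, which is the entire content of the lemma. First, the upgrade from the critical $L^q$ estimate to the $W^{r,q}$ seminorm does not work as described: applying the critical embedding to $v_h:=u(\cdot+h)-u$ gives only $\|v_h\|_{L^q}\le C\|v_h\|_{W^{\rho,p}}\le 2C\|u\|_{W^{\rho,p}}$, whose right-hand side carries no decay in $h$, so the integral $\int_0^1 h^{-1-rq}\|v_h\|_{L^q}^q\,dh$, which is comparable to the $W^{r,q}$ seminorm, cannot be bounded this way. What is actually required is a scale-$h$ localized estimate: partition $I$ into intervals $J$ of length $\sim h$, prove on each the Sobolev--Poincar\'e oscillation bound
\[
\|u-u_J\|_{L^q(J)}\le C\,|J|^{\rho-1/p+1/q}\,[u]_{W^{\rho,p}(J)}=C\,|J|^{r}\,[u]_{W^{\rho,p}(J)},
\]
where $u_J$ is the mean of $u$ over $J$ and $[\cdot]_{W^{\rho,p}(J)}$ the Gagliardo seminorm, and then sum over the partition using $\|\cdot\|_{\ell^q}\le\|\cdot\|_{\ell^p}$ together with the superadditivity of $[u]^p_{W^{\rho,p}(\cdot)}$ — this summation is where the hypothesis $p\le q$ genuinely enters, and your sketch never invokes it in this step. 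Second, your core tool, the Hardy--Littlewood--Sobolev/Young estimate, is false at $p=1$ (the relevant potential operator is only of weak type there), yet the hypotheses allow $p=1$; you flag the endpoints $\rho p=1$ and $q=\infty$, but not this one, and repairing it requires replacing HLS by the elementary oscillation argument above. So the two-step factorization is a viable alternative to Simon's route, but as written the proposal asserts rather than proves its central estimate, and one of its named tools fails in a case the lemma covers.
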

\begin{lemma}\label{lem: Ap3}\cite[Theorem 2.1]{Flan95}
Assume that $B_0\subset B\subset B_1$ are Banach spaces, 
$B_0$ and $B_1$ reflexive with compact embedding of $B_0$ 
in $B$. Let $p\in(1,\infty)$ and $\alpha\in(0,1)$ be given. 
Then the embedding 
\[
L^p(0,T;B_0)\cap W^{\alpha,q}(0,T;B_1)
\hookrightarrow
L^p(0,T;B)
\,\text{ is compact.}
\]
\end{lemma}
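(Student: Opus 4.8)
The plan is to obtain this fractional Aubin--Lions--Simon compactness lemma from two classical ingredients: an Ehrling-type interpolation inequality for the chain $B_0\hookrightarrow B\hookrightarrow B_1$, and Simon's characterisation of relatively compact subsets of $L^p(0,T;B)$. Fix a bounded subset $\mathcal{F}$ of $L^p(0,T;B_0)\cap W^{\alpha,q}(0,T;B_1)$ and let $M$ bound the norms of its elements in both spaces; the aim is to prove that $\mathcal{F}$ is relatively compact in $L^p(0,T;B)$. The first step is to record Ehrling's inequality: since $B_0$ embeds compactly into $B$ and $B$ embeds continuously into $B_1$, for every $\eta>0$ there exists $C_\eta>0$ with
\[
\|v\|_B\leq \eta\,\|v\|_{B_0}+C_\eta\,\|v\|_{B_1}\qquad\text{for all }v\in B_0.
\]
I would prove this by contradiction: a normalised sequence $\|v_k\|_{B_0}=1$ violating the bound has, by compactness of $B_0\hookrightarrow B$, a $B$-convergent subsequence, while the failure of the inequality forces $\|v_k\|_{B_1}\to 0$, so the $B$-limit is $0$; but then $\|v_k\|_B\to0$ contradicts $\|v_k\|_B>\eta$.

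Next I would invoke Simon's criterion: a bounded family $\mathcal{F}\subset L^p(0,T;B)$ (with $1\leq p<\infty$) is relatively compact there if and only if (i) the set of time-averages $\{\int_{t_1}^{t_2}f(t)\dt:f\in\mathcal{F}\}$ is relatively compact in $B$ for all $0\leq t_1<t_2\leq T$, and (ii) $\sup_{f\in\mathcal{F}}\|f(\cdot+h)-f(\cdot)\|_{L^p(0,T-h;B)}\to0$ as $h\to0^+$. Condition~(i) is the easy one: by the Bochner--H\"older inequality
\[
\Big\|\int_{t_1}^{t_2}f(t)\dt\Big\|_{B_0}\leq (t_2-t_1)^{1-1/p}\,\|f\|_{L^p(0,T;B_0)}\leq (t_2-t_1)^{1-1/p}M,
\]
so these averages lie in a bounded subset of $B_0$, which is relatively compact in $B$ by the compact embedding.

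The core of the argument, and the step I expect to be the \textbf{main obstacle}, is condition~(ii), where the fractional time regularity must be converted into equicontinuity of translations. Here I would apply Ehrling pointwise in $t$ and take the $L^p$ norm: for $0<h<T$,
\[
\|f(\cdot+h)-f(\cdot)\|_{L^p(0,T-h;B)}
\leq \eta\,\|f(\cdot+h)-f(\cdot)\|_{L^p(0,T-h;B_0)}
+C_\eta\,\|f(\cdot+h)-f(\cdot)\|_{L^p(0,T-h;B_1)}.
\]
The first term is bounded by $2\eta M$ uniformly in $h$ and $f$. For the second, the Slobodeckij seminorm controls the $L^q$ modulus of continuity (the embedding of $W^{\alpha,q}$ into the Nikolskii space characterised by $\sup_{h>0}h^{-\alpha}\|f(\cdot+h)-f(\cdot)\|_{L^q}$), giving $\|f(\cdot+h)-f(\cdot)\|_{L^q(0,T-h;B_1)}\leq C\,h^{\alpha}\,\|f\|_{W^{\alpha,q}(0,T;B_1)}\leq C\,h^{\alpha}M$; passing from $L^q$ to $L^p$ on the bounded interval---immediate when $q\geq p$ (the case $q=p$ used in the application) and otherwise requiring the exponent relation of Lemma~\ref{lem: Ap2}---then yields $\|f(\cdot+h)-f(\cdot)\|_{L^p(0,T-h;B_1)}\leq C'h^{\alpha}M$. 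Thus, given $\epsilon>0$, I would first fix $\eta$ so small that $2\eta M<\epsilon/2$, and then choose $h$ small enough that $C_\eta C'h^{\alpha}M<\epsilon/2$; this two-parameter balancing delivers the uniform smallness required in~(ii). The delicate points are precisely the justification of the fractional translation estimate (where $\alpha\in(0,1)$ enters) and the interplay between the exponents $p$ and $q$.

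With (i) and (ii) verified, Simon's criterion gives relative compactness of $\mathcal{F}$ in $L^p(0,T;B)$, which is the asserted compact embedding. Finally, reflexivity of $B_0$ and $B_1$ (together with $1<p,q<\infty$) makes $L^p(0,T;B_0)$ and $W^{\alpha,q}(0,T;B_1)$ reflexive, so any bounded sequence in $\mathcal{F}$ admits a weakly convergent subsequence; pairing this weak limit with the strong $L^p(0,T;B)$ limit just produced identifies the strong limit and confirms that it again belongs to the intersection space, which is what is used when the lemma is applied to extract convergent subsequences.
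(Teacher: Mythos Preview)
The paper does not prove this lemma at all: it is quoted verbatim from Flandoli--Gatarek \cite[Theorem~2.1]{Flan95} and placed in the Appendix as a black-box compactness tool, so there is no ``paper's own proof'' to compare against. Your proposal therefore supplies strictly more than the paper does.

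As for the argument itself, the route you outline --- Ehrling's lemma for the chain $B_0\hookrightarrow B\hookrightarrow B_1$ combined with Simon's translation criterion for relative compactness in $L^p(0,T;B)$, and the embedding $W^{\alpha,q}\hookrightarrow B^{\alpha}_{q,\infty}$ to turn the fractional seminorm into a uniform bound on $h^{-\alpha}\|\tau_h f-f\|_{L^q(B_1)}$ --- is the standard and correct one, and it goes through cleanly when $q\ge p$ (in particular for $q=p$, which is exactly how the lemma is invoked in Lemma~\ref{lem: conversub}). The only soft spot is your remark that the case $q<p$ follows ``from Lemma~\ref{lem: Ap2}'': that lemma moves between fractional Sobolev spaces, not between Lebesgue spaces, and on a bounded interval $L^q$ does not embed into $L^p$ when $q<p$. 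For that regime one instead interpolates the $L^q(B_1)$ translation estimate against the uniform $L^p(B_1)$ bound inherited from $L^p(B_0)$ via $B_0\hookrightarrow B_1$, which still yields a positive power of $h$ and closes the argument. Since the paper only uses $q=p$, this does not affect its application.
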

\begin{lemma}\label{lem: Ap4}\cite[Theorem 2.2]{Flan95}
Assume that $B_0\subset B$ are Banach spaces such that the 
embedding  $B_0\hookrightarrow B$ is compact. Let 
$p\in(1,\infty)$ and $0<\alpha<1$ and $\alpha p>1$. Then 
the embedding 
\[
W^{\alpha,q}(0,T;B_0)
\hookrightarrow
C([0,T];B)
\,\text{ is compact.}
\]
\end{lemma}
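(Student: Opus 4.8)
The plan is to derive this compactness from two ingredients: a fractional Sobolev (Morrey-type) embedding in the time variable, followed by the vector-valued Arzel\`a--Ascoli theorem. I read the exponent in $W^{\alpha,q}(0,T;B_0)$ as $p$ (consistent with the hypotheses $p\in(1,\infty)$, $\alpha p>1$), and I abbreviate the Gagliardo seminorm by
\[
[u]_{W^{\alpha,p}(0,T;B_0)}^p
=
\int_0^T\!\!\int_0^T
\frac{\|u(t)-u(\sigma)\|_{B_0}^p}{|t-\sigma|^{1+\alpha p}}\,d\sigma\,dt,
\]
so that $\|u\|_{W^{\alpha,p}(0,T;B_0)}^p=\|u\|_{L^p(0,T;B_0)}^p+[u]_{W^{\alpha,p}(0,T;B_0)}^p$.

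First I would establish the continuous embedding $W^{\alpha,p}(0,T;B_0)\hookrightarrow C^{0,\alpha-1/p}([0,T];B_0)$, whose whole point is that $\alpha p>1$ forces $\alpha-1/p>0$. The target is the pointwise H\"older estimate
\[
\|u(t)-u(s)\|_{B_0}\le C\,|t-s|^{\alpha-1/p}\,[u]_{W^{\alpha,p}(0,T;B_0)},
\]
with $C=C(\alpha,p)$ independent of $u$. I would prove it by the standard dyadic averaging argument: for $s<t$ and $h=t-s$, compare $u(s)$ (and symmetrically $u(t)$) with the mean $a_0$ of $u$ over $[s,t]$ by telescoping through the averages $a_j$ of $u$ over the dyadically shrinking intervals $[s,s+2^{-j}h]$; each increment $\|a_{j+1}-a_j\|_{B_0}$ is controlled by H\"older's inequality against the local seminorm, producing a bound of the form $C\,2^{-j(\alpha-1/p)}h^{\alpha-1/p}[u]_{W^{\alpha,p}}$, and summing the resulting geometric series (convergent precisely because $\alpha-1/p>0$) yields the estimate. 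This argument uses only the triangle and H\"older inequalities, so it transfers verbatim from scalar functions to $B_0$-valued ones.

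With the H\"older embedding in hand I would invoke Arzel\`a--Ascoli. Let $\mathcal F$ be a bounded subset of $W^{\alpha,p}(0,T;B_0)$. By the first step $\mathcal F$ is bounded in $C^{0,\alpha-1/p}([0,T];B_0)$, hence uniformly bounded and uniformly equicontinuous as a family of $B$-valued maps (equicontinuity in $B_0$ implies equicontinuity in the weaker norm of $B$). Moreover, for each fixed $t$ the set $\{u(t):u\in\mathcal F\}$ is bounded in $B_0$, and the compactness of $B_0\hookrightarrow B$ makes it relatively compact in $B$. The vector-valued Arzel\`a--Ascoli theorem then gives that $\mathcal F$ is relatively compact in $C([0,T];B)$, which is exactly the asserted compact embedding.

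I expect the fractional Morrey embedding of the first step to be the only genuine obstacle; the Arzel\`a--Ascoli step is routine once equicontinuity and pointwise precompactness are secured. The subtle point is obtaining the \emph{sharp} H\"older exponent with a constant uniform over the bounded family: a naive single-scale averaging only yields a modulus of continuity that is not uniformly controlled, whereas the dyadic telescoping upgrades it to a true uniform H\"older bound. A minor technical caveat is that for $s$ or $t$ near the endpoints $0,T$ the shrinking intervals must be chosen inside $[0,T]$, which is a harmless adjustment.
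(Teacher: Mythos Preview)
The paper does not supply its own proof of this lemma; it is simply quoted in the Appendix as \cite[Theorem 2.2]{Flan95}. Your argument is correct and is in fact the standard route to this result (and essentially the one Flandoli--Gatarek use): the fractional Morrey embedding $W^{\alpha,p}(0,T;B_0)\hookrightarrow C^{0,\alpha-1/p}([0,T];B_0)$, valid precisely because $\alpha p>1$, gives uniform equicontinuity and pointwise $B_0$-boundedness of any bounded family, and then the compact inclusion $B_0\hookrightarrow B$ feeds directly into the vector-valued Arzel\`a--Ascoli theorem. Your dyadic-averaging/Campanato telescoping for the H\"older bound is a clean and correct way to obtain the sharp exponent with a uniform constant, and your reading of the exponent $q$ as $p$ (an evident typo in the statement) is right.
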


\end{document}